\theoremstyle{plain}
\newtheorem{thm}{Theorem}[section] 
\newtheorem{pro}[thm]{Proposition}
\newtheorem{lem}[thm]{Lemma}
\newtheorem{con}{Conjecture}
\theoremstyle{definition}
\theoremstyle{remark}
\newtheorem*{rem}{Remark}
\def\ds{\displaystyle}
\def\pf{{\em Proof.}\ \,}
\def\su{\subseteq}
\def\({\left(}
\def\){\right)}
\def\<{\langle}
\def\>{\rangle}
\def\lfun{\longrightarrow}
\def\sur{\twoheadrightarrow}
\def\bC{{\mathbb{C}}}
\def\bE{{\mathbb{E}}}
\def\bF{{\mathbb{F}}}
\def\bH{{\mathbb{H}}}
\def\bP{{\mathbb{P}}}
\def\bR{{\mathbb{R}}}
\def\bZ{{\mathbb{Z}}}
\def\cA{{\mathcal A}}
\def\cF{{\mathcal F}}
\def\cL{{\mathcal L}}
\def\cM{{\mathcal M}}
\def\cO{{\mathcal O}}
\def\cS{{\mathcal S}}
\def\cT{{\mathcal T}}
\def\cW{{\mathcal W}}
\def\ch{{\textrm{ch}}}
\def\Coh{{\textrm{Coh\,}}}
\def\Im{{\textrm{Im\,}}}
\def\Real{{\textrm{Re\,}}}
\def\rk{{\textrm{rk}}}
\newcommand{\Proj}{{\mathbb P}^1}
\newcommand{\PP}{{\mathbb P}^1 \times {\mathbb P}^1}
\newcommand{\Ptoo}{{\mathbb P}^2}
\newcommand{\stabdivS}{\text{Stab}_{div}(S)}
\newcommand{\Kl}{\underline{K}}
\newcommand{\Jl}{\overline{J}}
\newcommand{\TE}{{\cO_S(E)|_E}}
\newcommand{\Homdot}[2]{\text{Hom}^\bullet (#1,#2)}
\newcommand{\Homx}[3]{\text{Hom}^{#1} (#2,#3)}
\newcommand{\Emo}{\bE_{-1}}
\newcommand{\To}{\rightarrow}
\begin{document}

\title{Projectivity of Bridgeland Moduli Spaces on Del Pezzo Surfaces of Picard Rank 2}


\author{Daniele Arcara and Eric Miles}
\date{}

\maketitle

\begin{abstract}

We prove that, for a natural class of Bridgeland stability conditions on $\PP$ and the blow-up of $\bP^2$ at a point, the moduli spaces of Bridgeland semistable objects are projective. Our technique is to find suitable regions of stability conditions with hearts that are (after ``rotation'') equivalent to representations of a quiver. The helix and tilting theory is well-behaved on Del Pezzo surfaces and we conjecture  that this program (begun in \cite{ABCH}) runs successfully for all Del Pezzo surfaces, and the analogous Bridgeland moduli spaces are projective.


\end{abstract} 

\tableofcontents



\section{Introduction}

Let $S$ be a smooth projective complex surface and $D(S) = D^b(\Coh S)$ be its associated bounded derived category. A Bridgeland stability condition $\sigma$ gives a notion of stability for objects of $D(S)$ and it is interesting to study the moduli space $\cM_\sigma(v)$ of $\sigma$-semistable objects of Chern character $v$.

Unlike Mumford and Gieseker stability, Bridgeland stability conditions are not a priori tied to Geometric Invariant Theory (GIT), and consequently the structure of Bridgeland moduli spaces $\cM_\sigma(v)$ is not fully understood in general, e.g., are they projective? A general approach to this question is taken in \cite{BMprojandbirat} where Bayer and Macr\`i construct nef divisors on Bridgeland moduli spaces and show that for K3 surfaces, the divisors are ample.


Another method to deduce structure for these moduli spaces is to find particular stability conditions that have ties to GIT, and exploit this connection. For example, the first author and Bertram \cite{ABL} define a class of stability condition defined by a choice of general and ample divisor in $S$. When $S$ has Picard rank 1, the space of these stability conditions is parameterized by real variables, $s,t$ with $t>0$, and for large $t$ Bridgeland stability is equivalent to Gieseker stability (e.g. \cite[Proposition 6.2]{ABCH}) and thus the corresponding Bridgeland moduli spaces have a GIT interpretation and are projective. 

For $S$ a principally polarized abelian surface, Maciocia and Meachan \cite{MacMea} use this connection to deduce projectivity for other Bridgeland stability conditions by ``sliding down the wall'': for invariants corresponding to twisted ideal sheaves, they show that one can move a stability condition (taken from a 1-parameter family of stability conditions) so that $t$ is arbitrarily small without crossing any walls for those invariants (the walls are nested semi-circles centered on the $s$-axis). They conclude projectivity by relating stability conditions with small $t$ to those with large $t$ via a Fourier-Mukai transform.

For $S=\Ptoo$, the first author, Bertram, Coskun, and Huizenga  \cite{ABCH} find stability conditions with small $t$ where Bridgeland stability for complexes of sheaves is equivalent to King's $\chi$-stability for representations of a quiver \cite{kingquiver}. By a result of King, it follows that the associated Bridgeland moduli spaces are projective. To extend this structure to other stability conditions, they show that for invariants satisfying the Bogomolov inequality, one can ``slide down the wall'' and see the original Bridgeland moduli space as isomorphic to one associated to a stability condition with small $t$.


The quivers involved in \cite{ABCH} are associated to certain exceptional collections of objects in $D^b(\bP^2)$. These exceptional collections exist on other surfaces, e.g., Del Pezzo surfaces, but the ``quiver regions'' associated to the most natural exceptional collections are too small. Furthermore, determining new exceptional collections is difficult, as the action of the Artin braid group on the set of exceptional collections for $\Ptoo$ does not apply when $\dim K(S) \otimes \bC > 1 + \dim S$, which is the case for all other Del Pezzos.

However, Bridgeland and Stern \cite{helicesondelpezzos} describe an operation (called mutation defined by a height function) which produces new exceptional collections in this more general setting (and matches the action of the braid group when $S=\Ptoo$). This operation is applicable for any smooth Fano variety, but works in an ideal way on Del Pezzo surfaces. We look to extend the techniques of \cite{ABCH} to the other Del Pezzo surfaces.


\begin{con}
\label{con:main}
The program of \cite{ABCH} can be carried out on any Del Pezzo surface $S$, yielding the projectivity of the spaces $\cM_\sigma(v)$.
\end{con}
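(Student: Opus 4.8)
The plan is to fix a Del Pezzo surface $S$ and a Chern character $v$ satisfying the Bogomolov inequality $\Delta(v)\ge 0$, and to reproduce in this generality the architecture of \cite{ABCH}. I would work inside the $(s,t)$-family of stability conditions $\sigma_{s,t}$ of \cite{ABL}: for $t\gg 0$ one identifies $\cM_{\sigma_{s,t}}(v)$ with the corresponding Gieseker moduli space, which is projective by GIT, giving projectivity on the ``large volume'' chamber. The goal is then to transport this projectivity to every other chamber by exhibiting, in each region of interest, a stability condition whose heart is---after the ``rotation'' of the central charge---equivalent to the category of representations of a quiver with relations, so that King's theorem \cite{kingquiver} supplies a projective GIT moduli space.

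The technical heart is the production of the right exceptional collection. On a Del Pezzo surface a full strong exceptional collection $\mathbb{E}=(E_0,\dots,E_n)$, with $n+1=\rk K(S)$, yields a tilting bundle $T=\bigoplus_i E_i$ and an equivalence $\DS\simeq D^b(\textrm{mod-}A)$ with $A=\Hom(T,T)$ the path algebra of a quiver with relations; the extension closure $\la E_0,\dots,E_n\ra$ is the corresponding quiver heart. I would then show that, for a suitably chosen $\mathbb{E}$, this heart coincides up to rotation with the heart of a stability condition $\sigma_0$ lying in the relevant chamber for $v$, so that $\sigma_0$-semistability matches King's $\chi$-stability and $\cM_{\sigma_0}(v)$ is projective. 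Because the ``quiver regions'' attached to the most obvious exceptional collections are too small---and because for $\rk K(S)>1+\dim S$ the Artin braid group no longer acts transitively on exceptional collections---the essential input is the mutation operation of Bridgeland--Stern \cite{helicesondelpezzos}: starting from one exceptional collection, one mutates (via the height function) to generate a supply of exceptional collections whose quiver regions together cover enough of $\Stab(S)$ to reach the chamber of $\sigma_{s,t}$ for any prescribed $v$.

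With projectivity established both on the large-volume chamber (via Gieseker) and on a quiver chamber (via King), the final step is to connect the two by a path in $\Stab(S)$ along which $\cM_\sigma(v)$ is either constant or changes only by wall-crossings that preserve projectivity---the ``sliding down the wall'' of \cite{ABCH,MacMea}. On $\Ptoo$ this is clean because $\Pic$ has rank $1$ and the walls for fixed $v$ are nested semicircles centered on the $s$-axis; for higher Picard rank the wall-and-chamber decomposition lives in a stability manifold of larger dimension and is considerably more intricate.

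I expect this last point, entangled with the choice of mutation, to be the main obstacle. Concretely, for a given $v$ one must (i) select, among the mutated exceptional collections, one whose rotated quiver heart actually contains the semistable objects of class $v$ and whose quiver region is correctly positioned relative to $v$, and (ii) produce an explicit path from this quiver chamber to the large-volume chamber along which projectivity is transported. Controlling the Bogomolov-type inequality and the geometry of the walls well enough to guarantee such a path---uniformly over all $v$, and on every Del Pezzo surface---is the crux, and is precisely what the present paper carries out in the first nontrivial cases, $\PP$ and the blow-up of $\Ptoo$ at a point.
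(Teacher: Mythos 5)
The statement you are addressing is a conjecture; the paper itself does not prove it for general Del Pezzo surfaces, only for the two of Picard rank $2$, and your text is an outline of a strategy rather than a proof. As a strategy it does track the paper's architecture (quiver hearts from full strong exceptional collections, rotation, King's $\chi$-stability, mutation via height functions, sliding down the wall), but two points diverge from what the paper actually does, and one genuine gap remains. First, the paper never routes through Gieseker stability at large volume: projectivity comes entirely from King's theorem, because the translated quiver regions $R_{\bF\otimes L}$ cover the whole $xy$-plane of every slice $\cS_H$, and Maciocia's nested semicircular walls in the vertical planes $\Pi_u$ let one move any $\sigma$ along its own wall --- crossing no other wall for $v$, so that $\cM_\sigma(v)=\cM_{\sigma'}(v)$ literally, not merely ``up to projectivity-preserving wall-crossings'' --- into a quiver region. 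Your picture of connecting a quiver chamber to the large-volume chamber by a path of wall-crossings is a different, and much harder to control, mechanism than the one the paper uses.

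Second, and more seriously for the general conjecture: to exhibit a quiver region one must verify that every object of the dual collection $\bF$ is $\sigma$-stable on the candidate region, since \cite[Lemma 3.16]{macri} is applied to the rotated heart only after this is known. For Picard rank $2$ the stability of line bundles is supplied by \cite{linebundlessurfs}, and the paper still has to prove by hand --- via the hyperboloid wall computation for $\cW(G,\cO_S(E)|_E)$ and Bertram's Lemma --- that the torsion sheaf $\cO_S(E)|_E$ is Bridgeland stable everywhere. For Picard rank greater than $2$ the stability of line bundles is not characterized, and the dual collections contain more exceptional torsion sheaves $\cO_C(d)$ supported on $(-1)$-curves. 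Your outline does not address this input at all, and it is the concrete obstruction to carrying the program beyond rank $2$; without it the selection of a ``correctly positioned'' quiver region in step (i) of your plan cannot be carried out. So your proposal reproduces the intended program faithfully in outline but does not, and as written cannot, close the conjecture.
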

 
For the Del Pezzo surfaces of Picard rank 2, $S=\PP$ and $S=Bl_p\Ptoo$, we find stability conditions that, after an operation called ``rotation,'' have their hearts equivalent to a category of quiver representations. These hearts are generated by (shifts of) line bundles for $S=\PP$ and (shifts of) line bundles and a torsion sheaf for $S=Bl_p\Ptoo$. Understanding the Bridgeland stability of these objects is essential to finding these ``quiver stability conditions,'' and while the stability of line bundles is fully understood for these surfaces \cite{linebundlessurfs}, a full characterization of their stability is not known when the surface has Picard rank greater than 2.

Following \cite{ABCH}, Bridgeland stability for these quiver stability conditions is  equivalent to King's stability of representations of a quiver and we deduce projectivity for the associated moduli spaces of Bridgeland semi-stable objects.
To extend this structure to moduli spaces associated to other stability conditions we ``slide down the wall,'' using a result of Maciocia on the nestedness of walls in certain vertical planes in the space of stability conditions. 

Our main theorem is the following.




\begin{thm}
The conjecture is true for $S=\bP^1\times\bP^1$ and $S=Bl_p\bP^2$, which are Del Pezzo surfaces of Picard Rank $2$.
In particular, every moduli space $\cM_\sigma(v)$ is equivalent to a moduli space of representations of a quiver, and is therefore projective.
\end{thm}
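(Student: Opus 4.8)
The plan is to follow the blueprint of \cite{ABCH} in three movements: first produce a distinguished region of stability conditions whose heart is equivalent to a category of quiver representations, then invoke King's construction to obtain projectivity there, and finally propagate projectivity to arbitrary stability conditions by a wall-crossing argument.

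First I would fix the family of stability conditions of \cite{ABL}, now parameterized over a higher-dimensional base since $\PP$ and $Bl_p\Ptoo$ have Picard rank $2$. Within this space the key is to locate a \emph{quiver region}: a chamber of stability conditions $\sigma$ such that, after applying the rotation operation, the heart $\cA_\sigma$ is equivalent to the category $\mathrm{rep}(Q)$ of representations of a finite quiver $Q$ with relations. The objects generating this heart should be (shifts of) an appropriate exceptional collection---line bundles for $\PP$, and line bundles together with a torsion sheaf for $Bl_p\Ptoo$. Because the most naive exceptional collections yield regions that are too small (as noted in the introduction), I would deploy the mutation-by-height-function machinery of Bridgeland--Stern \cite{helicesondelpezzos} to replace them with collections whose associated quiver regions are large enough to contain the invariants we need. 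Verifying that the rotated heart is exactly $\mathrm{rep}(Q)$ amounts to checking that the generating objects are $\sigma$-stable of the same phase and that every object of $\cA_\sigma$ admits a filtration by them; this in turn rests on the known stability of line bundles on these surfaces \cite{linebundlessurfs}.

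Once the equivalence $\cA_\sigma \simeq \mathrm{rep}(Q)$ is established, Bridgeland semistability of an object translates into King's $\chi$-stability of the corresponding representation \cite{kingquiver}, with the central charge of $\sigma$ supplying King's weight. King's theorem then realizes $\cM_\sigma(v)$ as a GIT quotient, hence as a projective variety, for every $v$ whose semistable objects live in the quiver region.

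The final and most delicate step is to reach the quiver region from an arbitrary stability condition. For a fixed invariant $v$ I would restrict attention to a suitable vertical plane in the space of stability conditions and apply Maciocia's result that the walls for $v$ are nested in such a plane. Nestedness guarantees that, starting from any $\sigma$, one can slide ``down the wall''---decreasing the $t$-coordinate---into the quiver region without crossing a wall for $v$, so that the set of $\sigma$-semistable objects of class $v$ is preserved along the path. The moduli space $\cM_\sigma(v)$ is therefore isomorphic to a quiver moduli space and inherits projectivity. I expect this step to be the principal obstacle: one must verify that the quiver region lies below all walls for the relevant invariants, and that Maciocia's nestedness genuinely applies in the Picard rank $2$ setting, where the parameter space is larger and the wall geometry is correspondingly more intricate than in the Picard rank $1$ situation of \cite{ABCH, MacMea}.
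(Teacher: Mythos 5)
Your proposal follows the paper's argument essentially step for step: locate quiver regions via left tilts (mutations by height functions) of the standard exceptional collections, identify the rotated heart with $\cA_\bF\simeq\mathrm{rep}(Q)$ so that King's theorem gives projectivity there, and then use Maciocia's nested semicircular walls in the vertical planes $\Pi_u$ to transport an arbitrary $\cM_\sigma(v)$ into a quiver region. The final step is resolved in the paper exactly as you anticipate: one checks that the translates $R_{\bF\otimes L}$ of the quiver region(s) by line bundles cover the entire $xy$-plane of each slice $\cS_H$, so that the unique nested wall through any $\sigma$ must meet a quiver region (for $Bl_p\Ptoo$ this requires combining \emph{two} exceptional collections, since a single quiver region does not cover a fundamental domain).

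There is, however, one concrete gap. You assert that verifying the rotated heart equals $\mathrm{rep}(Q)$ ``rests on the known stability of line bundles on these surfaces.'' For $Bl_p\Ptoo$ this is not enough: the dual collections necessarily contain the torsion sheaf $\cO_S(E)|_E$ (and its shift), and its Bridgeland stability is \emph{not} covered by the line-bundle results you cite. Establishing that $\cO_S(E)|_E$ is $\sigma$-stable for every $\sigma\in\stabdivS$ is a genuine theorem in the paper, proved by induction on the rank of a destabilizing subobject $G\su\cO_S(E)|_E$: one shows the walls $\cW(G,\cO_S(E)|_E)$ are hyperboloids or cones, hence forced to cross one of the vertical planes $D.H/H^2=\mu_H(\Kl)$ or $D.H/H^2=\mu_H(\Jl)$ bounding the locus where $G$ remains a subobject in the heart, and then applies Bertram's Lemma to produce a lower-rank destabilizer, contradicting the inductive hypothesis. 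Without this ingredient the identification of the quiver regions for $Bl_p\Ptoo$ does not go through, so your outline as written is complete only for $\PP$. Relatedly, the ``principal obstacle'' you flag at the end is the easier part: once the coverage of the $xy$-plane is verified, Maciocia's nestedness applies verbatim in Picard rank $2$ because the slices $\cS_H$ are still three-dimensional and the planes $\Pi_u$ foliate them.
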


The paper is organized as follows.
In Section \ref{alltools}, we recall some of the definitions and tools that are needed in the rest of the paper.
In Section \ref{strategy} we describe our strategy.
In Section \ref{proofP1xP1} we prove the conjecture for $S=\bP^1\times\bP^1$.
Finally, in Section \ref{proofblpP2}, we prove the conjecture for $S=Bl_p\bP^2$.





\section{Useful definitions}\label{alltools}

In this section, we recall definitions and tools that we need in the rest of the paper.

\subsection{Bridgeland stability conditions on surfaces}

For $S$ a smooth projective complex surface, \cite{ABL} define a natural class of Bridgeland stability conditions that depend on a choice of ample and general divisor. We denote the set of such Bridgeland stability conditions $\stabdivS$ (the ``div'' stands for ``divisor''). We omit a general introduction to Bridgeland stability conditions and instead point out where the \cite{ABL} construction meets the requirements ``as we go.'' For full generality, the interested reader should see \cite{stabcondsontricats}.

Let $S$ be a smooth projective surface. Given two $\bR$-divisors $D,H$ with $H$ ample, we define a stability condition $\sigma_{D,H}$. To do so, we must specify a heart $\cA_{D,H}$ of a bounded $t$-structure on $D(S)$ and a function $Z_{D,H}:K_{num}(S) \to \bC$ (called the \textit{central charge}) satisfying certain positivity, filtration, and non-degenerate conditions. 

Our heart is generated by torsion sheaves, Mumford $H$-stable sheaves of ``high slope,'' and shifts of Mumford $H$-stable sheaves of ``low slope,'' where the Mumford $H$-slope is
$$ \mu_H(G) = \ds\frac{c_1(G).H}{\rk(G) H^2}. $$

Specifically, let $\cA_{D,H}$ be the tilt of the standard $t$-structure on $D(S)$ at $\mu_H(D) =\ds\frac{D.H}{H^2}$ defined by $\cA_{D,H} = \{G\in D(S) \mid H^i(G)=0 \text{ for } i\neq-1,0,\ H^{-1}(G)\in\cF_{D,H},\ H^0(G)\in\cT_{D,H}\}$ where
\begin{itemize}
\item
$\cT_{D,H}\subset \Coh(S)$ is the full subcategory closed under extensions generated by torsion sheaves and $\mu_H$-stable sheaves $G$ with $\mu_H(G)>\ds\frac{D.H}{H^2}$.
\item
$\cF_{D,H}\subset\Coh(S)$ is the full subcategory closed under extensions generated by $\mu_H$-stable sheaves $G$ with $\mu_H(G)\leq\ds\frac{D.H}{H^2}$.
\end{itemize}
\vspace{.1in}

Now define $Z_{D,H}$ by
$ Z_{D,H}(G) = - \ds\int{e^{-(D+iH)}\ch(G)}. $
It is equal to
$$ Z_{D,H}(G) = \left( -\ch_2(G) + c_1(G).D - \frac{\rk(G)}{2}(D^2 - H^2) \right)+ i \left( c_1(G).H - \rk(G) D.H \right) $$

The central charge $Z_{D,H}$ satisfies the following positivity property: for $0\neq G \in \cA_{D,H}$ we have $\Im Z(G) \geq 0$ and if $\Im Z(G) = 0$ then $\Real Z(G) < 0$. This property allows us to define stability for an object $G\in \cA_{D,H}$ using the ``Bridgeland slope'' function $$ \beta(G) = -\frac{\Real Z(G)}{\Im Z(G)} \in (-\infty, \infty] $$
(Note that if $Z(G)=-1$ then $\beta(G) = \infty$, and if $Z(G)=\sqrt{-1}$ then $\beta(G) = 0$.) We say that $G\in\cA_{D,H}$ is $\sigma_{D,H}$-stable (resp.\ $\sigma_{D,H}$-semistable) if for all nontrivial $G'\subset G$ in $\cA_{D,H}$ we have $\beta(G)>\beta(G')$ (resp.\ $\beta(G)\geq\beta(G')$). We extend the notion of stability to all objects of the derived category: $G\in D(S)$ is $\sigma_{D,H}$-(semi)stable if some shift $G[k]\in\cA_{D,H}$ and $G[k]$ is $\sigma_{D,H}$-(semi)stable.

There exist Harder-Narasimhan filtrations of objects $G\in\cA_{D,H}$ with respect to $\sigma_{D,H}$-semistable objects, defined analogously to Harder-Narasimhan filtrations of coherent sheaves with respect to Mumford $H$-semistable sheaves. By \cite[Corollary 2.1]{ABL} and \cite[Sections 3.6 \& 3.7]{stabcondsextrcontrs}, $\sigma_{D,H}$ is a full, numerical stability condition on $S$.

\subsection{Exceptional collections and associated hearts}
\label{exccolhts}

Here we describe the heart and quiver associated to a full, strong exceptional collection, as well as an operation that (in certain circumstances) yields new collections from others. We use $E_\star$ instead of $E$ to denote a particular exceptional object since in Section \ref{proofblpP2} we use $E$ for the exceptional divisor of $Bl_p\Ptoo$.


An \textbf{exceptional object} $E_\star \in D(S)$ is one with $\Homx{0}{E_\star}{E_\star}=\bC$ and $\Homx{i}{E_\star}{E_\star}=0$ for all $i\neq 0$. For a Del Pezzo surface $S$, Kuleshov and Orlov show that any exceptional object $E$ is either a Mumford $-K_S$-stable locally-free sheaf or a torsion sheaf of the form $\cO_C(d)$ with $C \subset S$ an irreducible rational curve satisfying $C^2=-1$ and $d\in\bZ$ an integer (see, e.g. \cite{helixtheory} or \cite[Theorem 8.1]{helicesondelpezzos}).
Note that the Del Pezzo condition is not necessary for much of the following discussion (we will make a note where it is used).

An \textbf{exceptional collection} $\bE \subset D(S)$ is a sequence $(E_1,\ldots,E_n)$ such that each $E_i$ is an exceptional object and  $i<j$ implies $\Homdot{E_j}{E_i}=0$. An exceptional collection $\bE$ is called \textbf{full} if  the smallest full triangulated subcategory of $D(S)$ containing $\bE$ is $D(S)$ itself, and $\bE$ is called \textbf{strong} if for $i<j$ we have $\Homx{k}{E_i}{E_j}=0$ unless $k=0$.

Bridgeland and Stern show that a full strong exceptional collection yields a heart $\cA_\bE \in D(S)$ (of a bounded $t$-structure) that is equivalent to the module category of a quiver algebra, which in turn is equivalent to the category of finite-dimensional representations of a quiver (possibly with relations) \cite[Theorem 2.4]{helicesondelpezzos}. Furthermore, the heart $\cA_\bE$ is the smallest full extension-closed subcategory of $D(S)$ containing the \textbf{dual collection}  $\bF=(F_n,\ldots,F_1)$ to $\bE$ (for this reason we often denote $\cA_\bE$ by $\cA_\bF$). The objects $F_i$ are defined by $F_i = L_{E_1} L_{E_2} \cdots L_{E_{i-1}}(E_i)$, where $L_A B$ is the \textbf{left mutation} of $B$ through $A$ defined by the canonical evaluation  triangle:
$$ \Homdot{A}{B}\otimes A \to B \to L_A B.$$
The quiver associated to the heart $\cA_\bE$ has a vertex associated to each $E_i$. The number of arrows $n_{ij}$ from vertex $i$ to vertex $j$ can be obtained using either irreducible hom's between objects in $\bE$ or extensions of objects in $\bF$:
\begin{itemize}
\item $n_{ij} = \dim \text{ coker } \displaystyle\bigoplus_{i<k<j} \Homx{}{E_i}{E_k}\otimes\Homx{}{E_k}{E_j} \to \Homx{}{E_i}{E_j}$
\item $n_{ij} = \dim \Homx{1}{F_j}{F_i}$
\end{itemize}
The objects $F_i$ correspond to the simple representation over the $i^{\text{th}}$ vertex.


Given a full exceptional collection $\bE=(E_1,\ldots,E_n)$ one may generate the \textbf{helix} $\bH = (E_i)_{i\in\bZ}$ using the rule $E_{i-n} = E_i \otimes \omega_S$. (This generates a helix of type $(n,3)$ in the notation of \cite{helicesondelpezzos}.) The helix $\bH$ is said to be \textbf{geometric} if $i<j$ implies $\Homx{k}{E_i}{E_j}=0$ unless $k=0$. If $\bH$ is a geometric helix then each ``thread'' $(E_k,\ldots,E_{k+n})$ is a full strong exceptional collection.

Bridgeland and Stern define an operation on exceptional collections (resp. helices) called \textbf{mutation defined by a height function} for $E_\star$, which constructs a new exceptional collection $\bE'$ (resp. helix $\bH'$) from a given one $\bE$ (resp. $\bH$) and choice of object $E_\star \in \bE$ (resp. $\bH)$. If $\bH$ is geometric then so is $\bH'$ and it is shown that, if $S$ is a Del Pezzo surface, then for any choice of object $E_\star \in \bH$ there is an associated height function. We do not formally define this operation here, but direct the interested reader to Appendix \ref{tiltingquivers} for the precise definition as well as an interpretation using the quiver associated to a thread containing $E_\star$. 

Because the quiver algebra associated to $\bE'$ is the left tilt of that associated to $\bE$ (\cite[Proposition 7.3]{helicesondelpezzos}) and since for Del Pezzos the mutation by a height function operation is determined solely by the choice of object $E$ (see Proposition \ref{quivermutprop}), we refer to a mutation defined by a height function for $E$ as a \textbf{left tilt} at $E$.



\section{Discussion of the strategy}\label{strategy}

Let us now discuss the specifics of our strategy.



From Section \ref{exccolhts}, a full, strong exceptional collection $\bE=(E_1,\ldots,E_n)$ with dual collection $\bF=(F_n,\ldots,F_1)$ yields a heart $\cA_\bF \subset D(S)$ that is generated by extensions of the objects $F_i$, and equivalent to finite-dimensional representations of a quiver. The exceptional collection $\bF$ is ``Ext'' in the sense of \cite[Definition 3.10]{macri} and so by \cite[Lemma 3.16]{macri}, if $\sigma=(Z,\cA)\in \stabdivS$ and $\bF\subset \cA$ then $\cA = \cA_\bF$.

A choice of invariants $v$ corresponds to a choice of dimension vector and following the proof of \cite[Proposition 8.1]{ABCH} we see that $\sigma$-stability is equivalent to King's $\chi$-stability \cite{kingquiver}. Thus the moduli space $\cM_\sigma(v)$ of Bridgeland semistable objects is projective when semistable objects are considered (and if only stable objects are considered, the space is quasi-projective).

We want to use the above observations to deduce that $\cM_\sigma(v)$ is projective for all $\sigma \in \stabdivS$ and (Bogomolov) $v$. The first issue with this strategy is that there is no $\sigma \in \stabdivS$ with $\bF\subset \cA$. This is because all dual collections we consider have objects $G[2]$ where $G$ is a sheaf, and these objects cannot belong to any $\cA$ by definition. However, there are stability conditions such that, after a gentle operation called ``rotation'' (which does not affect stability, but does affect what shift of certain objects belong in the heart), the rotated stability condition $\sigma[\phi]=(Z[\phi],\cA[\phi])$ has $\bF \subset \cA[\phi]$. 

A rotation is defined as follows:
Given $\sigma=(Z,\cA) \in\stabdivS$ and $\phi\in (0,1)$, a rotation by $\phi$ yields the Bridgeland stability condition $\sigma[\phi]=(Z[\phi],\cA[\phi])$ where $Z[\phi](G) = e^{-\phi \pi i}Z(G)$ and $\cA[\phi]$ is the subcategory closed under extensions generated by the objects
\begin{itemize}
\item $G$ such that $G\in\cA$ is $\sigma$-semistable and $\text{arg }Z(G)/\pi > \phi$,
\item $G[1]$ such that $G\in\cA$ is $\sigma$-semistable and $\text{arg }Z(G)/\pi \leq \phi$.
\end{itemize}

In particular, if $G\in\cA$ is a $\sigma$-semistable object with arg $Z(G)/\pi\leq\phi$, then $G$ is ``replaced'' by $G[1]$ in $\cA[\phi]$.

%

We emphasize that rotation does not affect stability: $G$ is $\sigma$-(semi)stable iff $G$ is $\sigma[\phi]$-(semi)stable (recall that $G$ is $\sigma$-(semi)stable iff $G[k]$ is $\sigma$-(semi)stable for all $k$).

In Sections \ref{proofP1xP1} and \ref{proofblpP2} we find regions $R_\bF$ associated to a dual collection $\bF$ (where after a rotation by $\phi$, we have $\cA[\phi]=\cA_\bF$). We call $R_\bF$ a \textbf{quiver region} and  any $\sigma\in R_\bF$ a \textbf{quiver stability condition}. We find $R_\bF$ by determining the stability conditions $\sigma$ such that each object (or shift of an object) in $\bF$ is $\sigma$-semistable and where the objects have the correct Bridgeland slopes relative to each other so that after rotating we have $\bF \subset \cA[\phi]$. 

After finding a quiver region $R_\bF$, any choice of line bundle $L$ yields the quiver region $R_{\bF\otimes L}$ associated to the exceptional collection $\bE\otimes L = (E_1 \otimes L,\ldots,E_n\otimes L)$. Our discussion above shows that for any quiver stability condition $\sigma$ and choice of invariants $v$, we have $\cM_\sigma(v)$ projective. To extend this projectivity to other stability conditions, we ``slide down the wall'' in the following sense. 

In \cite{MaciociaWalls}, Maciocia presents real 3-dimensional slices of $\stabdivS$ determined by a choice of an ample $\bR$-divisor $H$ and a divisor orthogonal to $H$. When $S$ has Picard rank 2, these slices are determined solely by $H$, and are defined as
$$ \cS_H = \{ \sigma_{D,tH} | D\ \bR \text{-divisor}, t >0 \}.$$
We parametrize $\cS_H$ with the coordinates $(x,y,t)$ where the $xy$-plane is a parametrization of $D$ in $NS(S)\otimes \bR$. 

To determine $\sigma$-stability of objects and relative Bridgeland slopes, we must consider \textbf{walls} 
$$\cW(G,G') = \{ \sigma \in \stabdivS | \beta(G) = \beta(G')\}.$$
Maciocia shows that there are disjoint vertical\footnote{By ``vertical'' we mean defined by an equation involving only $x$ and $y$.}
planes $\Pi_u \subset \cS_H$ depending on a real parameter $u$ such that $\bigcup_{u\in\bR} \Pi_u = \cS_H$, and such that for any object $G$ whose Chern characters satisfy the Bogomolov inequality, the walls $\cW(G,\_) \cap \Pi_u$ are nested semi-circles (or vertical lines) \cite[Proposition 2.6]{MaciociaWalls}. 

\begin{figure}[H]
\begin{center}
\includegraphics[scale=.3]{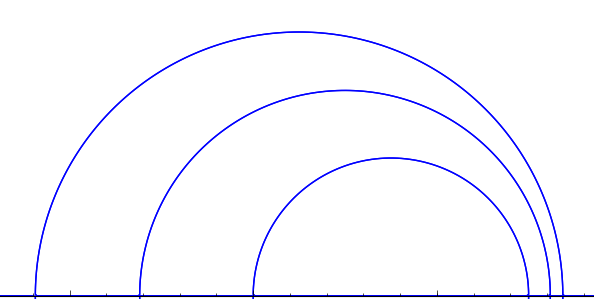}
\end{center}
\caption{Nested walls in $\Pi_u \subset \cS_H$}
\label{fig:nestedwalls}
\end{figure}

Any $\sigma \in \Pi_u$ lies on at most one of these nested semi-circles, say $\cW = \cW(G,G')\cap \Pi_u$. Since moving along $\cW$ crosses no other walls $\cW(G,\_)$, for $\sigma, \sigma' \in \cW$ we have $G$ $\sigma$-(semi)stable iff $G$ $\sigma'$-(semi)stable. More generally, since the walls $\cW(G,\_)=\cW(v,\_)$ are determined by the invariants of the objects involved, for $\sigma, \sigma' \in \cW$ we have $\cM_\sigma(v) = \cM_{\sigma'}(v)$. 

Thus, if $\cW \subset \Pi_u$ intersects a quiver region, then $\cM_\sigma(v)=\cM_{\sigma'}(v)$ for some quiver stability condition $\sigma'$ and  hence $\cM_\sigma(v)$ is projective. For each of $S= \PP$ and $S= Bl_p\Ptoo$ we find a quiver region $R_\bF$ such that the quiver regions $\bR_{\bF\otimes L}$ cover the entire $xy$-plane for each $\cS_H$. Since each $\sigma\in\cS_H$ is in a $\Pi_u$ plane, we  obtain the projectivity of $\cM_\sigma(v)$ for all Bogomolov $v$ and $\sigma\in\stabdivS$ and thus prove Conjecture \ref{con:main} in these cases.

We also use the fact that walls $\cW(v,\_)$ in $\Pi_u$ are nested semi-circles for simplification: To understand the geometry of a wall in $\cS_H$ it suffices to consider its intersection with the $xy$-plane.


\section{Proof of the conjecture for $S=\bP^1\times\bP^1$}\label{proofP1xP1}

In this section, we determine a suitable\footnote{Suitable in the sense that the entire $xy$-plane of $\cS_H$ is covered by tensoring with line bundles.} quiver region in $\stabdivS$ for $S=\bP^1\times\bP^1$ and use the region to conclude projectivity for the Bridgeland moduli spaces $\cM_\sigma(v)$ where the invariants $v$ satisfy the Bogomolov inequality (following the discussion of Section \ref{strategy}).

We shall denote by $\cO_S(a,b)$ the line bundle $p_1^*\cO_{\Proj}(a) \otimes p_2^*\cO_{\Proj}(b)$ with $p_1, p_2$ the natural projections from $\PP$ to the two copies of $\bP^1$, respectively. Also, we shall denote by $D_1$ and $D_2$ the divisors corresponding to $\cO_S(1,0)$ and $\cO_S(0,1)$, respectively. These are the generators of the cone of effective curves, and every other divisor can be written as $xD_1+yD_2$ for some $x,y \in\bR$. Note that $D_1^2=D_2^2=0$, and $D_1.D_2=1$.

A Bridgeland stability condition $\sigma_{D,H}\in\stabdivS$ is determined by real divisor classes $D=xD_1+yD_2$ and $H=aD_1+bD_2$, with $H$ ample. By the Nakai-Moishezon criterion, $H$ is ample iff $a,b>0$.

For a fixed $D,H$ pair, the corresponding Bridgeland stability condition $\sigma_{D,H}=\sigma=(\cA,Z)$ (we now drop the subscript $D,H$) has heart $\cA$ generated by
$\mu_H$-stable sheaves $G$ of slope $\mu_H(G)>bx+ay$,
torsion sheaves, and
objects of the from $G[1]$, where $G$ is a $\mu_H$-stable sheaf of slope $\mu_H(G)\leq bx+ay$.

The central charge $Z$ is defined as
$$ Z(G) = -\ch_2(G) + d_2x + d_1y + r (ab - xy) + i \left( d_2a + d_1b - r (bx + ay) \right), $$
where $\rk(G)=r$, and $c_1(G)=d_1D_1+d_2D_2$.

The slices $\cS_H\subset \stabdivS$ defined in Section \ref{strategy} are given here by  $\cS_H = \{ \sigma_{D,tH} | D = xD_1 + yD_2 \text{ and } t>0\}$. We identify the Bridgeland stability condition $\sigma=\sigma_{D,tH}$ where $D = xD_1 + yD_2$ with the coordinates $(x,y,t)$. Often we need only consider the $x$ and $y$ coordinates, and we project to the the $xy$-plane.


\subsection{A suitable exceptional collection}

In the following, we rely on the definitions given in Section \ref{exccolhts} and the specifics of the ``left tilt'' operation given in Appendix \ref{tiltingquivers}.

The natural exceptional collection
$$ \bE = ( \cO_S,\ \cO_S(1,0),\ \cO_S(0,1),\ \cO_S(1,1) ) $$
on $\PP$ is full and strong (and in fact generates a geometric helix), but does not have a large enough quiver region for our purposes,
so we find another exceptional collection.
The quiver associated to $\bE$ is

\begin{center}
\begin{tikzcd}
\overset{\cO_S}{\bullet} \arrow[yshift=0.7ex]{r} \arrow[yshift=-0.7ex]{r} \arrow[xshift=0.7ex]{d} \arrow[xshift=-0.7ex]{d}
& \overset{\cO_S(1,0)}{\bullet} \arrow[xshift=0.7ex]{d} \arrow[xshift=-0.7ex]{d} \\
\overset{\cO_S(0,1)}{\bullet} \arrow[yshift=0.7ex]{r} \arrow[yshift=-0.7ex]{r} 
& \overset{\cO_S(1,1)}{\bullet}
\end{tikzcd}
\end{center}
and we use this information to left tilt\footnote{For details on performing the left tilt operation, see Proposition \ref{quivermutprop}.} $\bE$ at $\cO_S(1,1)$ and obtain the exceptional collection
$$ \bE' = ( \cO_S,\ L_{\cO_S(1,0)}L_{\cO_S(0,1)}(\cO_S(1,1))[-1],\ \cO_S(1,0),\ \cO_S(0,1) ). $$

Straightforward calculations yield\footnote{See Appendix \ref{E'} for the calculations.}
$$ \bE' = ( E_1, E_2, E_3, E_4 ) = ( \cO_S,\ G,\ \cO_S(1,0),\ \cO_S(0,1) ), $$
where $G$ is a sheaf of rank 3, $c_1(G)=c_1(\cO_S(1,1))$, and $\ch_2(G)=-1$. The dual collection to $\bE'$ is is\footnote{See Appendix \ref{F'} for the calculations.}
$$ \bF' = ( F_4, F_3, F_2, F_1 ) = ( \cO_S(-2,-1)[2],\ \cO_S(-1,-2)[2],\ \cO_S(-1,-1)[1],\ \cO_S ). $$

Note that $\bF'$ is indeed an ``Ext'' exceptional collection in the sense of \cite[Definition 3.10]{macri}.
The heart $\cA_{\bF'}$ is naturally equivalent to finite-dimensional (contravariant) representations of the quiver

\begin{center}
\begin{tikzcd}
\overset{\cO_S}{\bullet} \arrow[yshift=0.7ex]{dr} \arrow[yshift=-0.7ex]{dr} \arrow[yshift=-1.4ex]{dr} \arrow{dr} 
 \\
& \overset{G}{\bullet} \arrow[yshift=0.7ex]{r} \arrow[yshift=-0.7ex]{r}  \arrow[xshift=-0.7ex]{d}  \arrow[xshift=0.7ex]{d} 
& \overset{\cO_S(1,0)}{\bullet}\\
& \overset{\cO_S(0,1)}{\bullet}
\end{tikzcd}
\end{center}

This quiver remains constant for any tensor of $\bE'$ and $\bF'$ by line bundles (with the labels above the vertices adjusted appropriately). We now locate the quiver region associated to $\bF'$.



\subsection{The associated quiver region}

It follows from \cite[Theorem 1.1]{linebundlessurfs} that all objects appearing in $\bF'$ (and their shifts) are $\sigma$-stable for all $\sigma\in\stabdivS$. Therefore, to find the associated quiver region, we need only find the stability conditions that can be rotated so the new heart contains $\cO_S(-2,-1)[2]$, $\cO_S(- 1,-2)[2]$, $\cO_S(-1,-1)[1]$, and $\cO_S$. We prove the following, where we restrict to a slice $\cS_H\subset \stabdivS.$

\begin{lem}
The quiver region $R_{\bF'} \subset \cS_H \subset \stabdivS$ associated to $$\bF' = ( \cO_S(-2,-1)[2],\ \cO_S(-1,-2)[2],\ \cO_S(-1,-1)[1],\ \cO_S )$$ is the region strictly inside both of the ellipsoidal walls $\cW(\cO_S,\cO_S(-2,-1)[1])$ and $\cW(\cO_S,\cO_S(-1,-2)[1])$.
\end{lem}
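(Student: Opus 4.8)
The plan is to reduce the statement to a pair of slope inequalities and then show that these cut out exactly the claimed region. Since, by the cited stability of line bundles, every object of $\bF'$ (and each of its shifts) is $\sigma$-stable for every $\sigma\in\cS_H$, membership in $R_{\bF'}$ is governed \emph{only} by the rotation condition: $\sigma\in R_{\bF'}$ iff there is an angle $\phi\in(0,1)$ with $\cO_S,\ \cO_S(-1,-1)[1],\ \cO_S(-2,-1)[2],\ \cO_S(-1,-2)[2]$ all lying in $\cA[\phi]$. Because $\cA[\phi]$ consists of the semistable objects whose lifted phase lies in $(\phi,\phi+1]$, this holds iff the four lifted phases fit inside a single half-open interval of length one. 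First I would compute, for each of the four line bundles $L$, the pair $(\Real Z(L),\Im Z(L))$ from the given central-charge formula and record the sign of $\Im Z(L)$, which fixes the $\cT/\cF$ membership and hence makes the $\cA$-phases unambiguous.

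Next I would identify the binding constraints. Writing out the four lifted phases, $\cO_S$ is the generator of least phase while $\cO_S(-2,-1)[2]$ and $\cO_S(-1,-2)[2]$ are the generators of greatest phase, so the length-one-interval condition degenerates precisely when the phase of $\cO_S$ meets that of one of these two top generators, i.e.\ when $\cO_S$ and $\cO_S(-2,-1)[1]$ (resp.\ $\cO_S(-1,-2)[1]$) acquire equal $\beta$-slope. Since $\beta$ is invariant under shifts (both $\Real Z$ and $\Im Z$ change sign together), these coincidences are the walls $\cW(\cO_S,\cO_S(-2,-1)[1])$ and $\cW(\cO_S,\cO_S(-1,-2)[1])$. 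Clearing denominators in $\beta(\cO_S)=\beta(\cO_S(-2,-1))$ gives the locus $\Real Z(\cO_S)\,\Im Z(\cO_S(-2,-1))-\Real Z(\cO_S(-2,-1))\,\Im Z(\cO_S)=0$; a direct expansion shows the cubic terms cancel, leaving a quadratic in $(x,y)$ whose leading form is $-bx^2-2ay^2$ for $H=aD_1+bD_2$, which is negative definite. Hence each wall is an ellipse in the $xy$-plane (an ellipsoid in $\cS_H$ once the scaling $H\mapsto tH$ is restored), and a sign check of this quadratic at an interior sample point shows that the slope inequality $\beta(\cO_S)>\beta(\cO_S(-2,-1))$ holds exactly on the bounded interior, and symmetrically for $\cO_S(-1,-2)$. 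This is the content of ``strictly inside both.''

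The hard part, and where I expect the real work, is showing that these are the \emph{only} walls bounding $R_{\bF'}$, so that the region is exactly the intersection of the two open ellipsoidal interiors. Concretely I must verify that throughout this intersection: (a) the four line bundles keep the $\cT/\cF$ membership needed for the rotation to be defined, i.e.\ the membership walls $\Im Z(L)=0$ do not cut the interior; (b) the middle generator $\cO_S(-1,-1)[1]$ never becomes extreme, so that $\cO_S$ remains the minimal-phase generator and $\cO_S(-2,-1)[2],\ \cO_S(-1,-2)[2]$ the maximal-phase generators, making the competing walls $\cW(\cO_S,\cO_S(-1,-1))$, $\cW(\cO_S(-1,-1),\cO_S(-2,-1))$, $\cW(\cO_S(-1,-1),\cO_S(-1,-2))$ inactive on the interior; and (c) consequently an admissible $\phi\in(0,1)$ exists, since the maximum of the three lower phases is then strictly below the minimum of the two upper phases. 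Each of (a)--(c) amounts to showing a competing conic does not meet the interior of the two ellipses, which I would settle by comparing the conics directly (nesting of ellipses, or sign checks of the relevant quadratics along the bounding ellipses).

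Finally, for the converse I would observe that crossing either ellipse reverses the corresponding slope inequality, forcing the phase of $\cO_S$ to pass that of $\cO_S(-2,-1)[2]$ (resp.\ $\cO_S(-1,-2)[2]$); the lifted phases then no longer fit in an interval of length one, so no rotation places all four objects into a single heart and $\sigma\notin R_{\bF'}$. Combining the two directions identifies $R_{\bF'}$ with the locus strictly inside both ellipsoidal walls. I expect step three to be the main obstacle: ruling out the $\cO_S(-1,-1)$-walls and the membership walls requires understanding precisely how several conics are nested inside the two bounding ellipses, whereas the ellipse computation and the converse are routine once the central charges are in hand.
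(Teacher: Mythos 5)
Your overall strategy coincides with the paper's: all four objects of $\bF'$ are $\sigma$-stable everywhere by the cited result on line bundles, so membership in $R_{\bF'}$ reduces to phase inequalities; the two walls $\cW(\cO_S,\cO_S(-2,-1)[1])$ and $\cW(\cO_S,\cO_S(-1,-2)[1])$ are computed to be ellipsoids; and the remaining constraints (the $\cT/\cF$ membership lines and the comparisons involving $\cO_S(-1,-1)$) must be shown to impose nothing new. Your ellipse computation and your converse direction are fine.

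The genuine gap is in your step (b). You assert that $\cO_S$ is always the minimal-phase generator on the region, and you plan to certify this by showing that the competing conic $\cW(\cO_S,\cO_S(-1,-1))$ does not meet the interior of the two ellipses. That conic \emph{does} meet the interior. Its equation in $\cS_H$ is $(a+b)abt^2+b(x^2+x)+a(y^2+y)=0$, and the point $(x,y,t)=(-\tfrac12,-\tfrac12,\tfrac{1}{2\sqrt{ab}})$ lies on it while satisfying $t^2ab(a+2b)+2a(y^2+y)+b(x^2+2x)=-\tfrac{a+b}{4}<0$ and $t^2ab(2a+b)+a(y^2+2y)+2b(x^2+x)=-\tfrac{a+b}{4}<0$, i.e.\ it is strictly inside both bounding ellipsoids. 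So in part of $R_{\bF'}$ the object $\cO_S(-1,-1)[1]$ has strictly smaller phase than $\cO_S$, your extremality claim fails, and the length-one-interval condition there is governed by $\beta(\cO_S(-1,-1)[1])>\beta(\cO_S(-2,-1)[1]),\ \beta(\cO_S(-1,-2)[1])$ --- an inequality not implied by the two you keep. (Similarly, the membership line $\Im Z(\cO_S(-1,-1))=0$, namely $a(y+1)+b(x+1)=0$, cuts through the interior, so the $\cT/\cF$ membership of $\cO_S(-1,-1)$ is not constant on the region, contrary to your step (a) as stated for all four line bundles.) The lemma survives because the analysis must be done conditionally on which shift of $\cO_S(-1,-1)$ lies in $\cA$: where $\cO_S(-1,-1)\in\cA$ one needs $\beta(\cO_S)>\beta(\cO_S(-1,-1))$, and where $\cO_S(-1,-1)[1]\in\cA$ one needs the comparison against the other two shifted line bundles; the wall $\cW(\cO_S,\cO_S(-1,-1))$ only crosses the region in the second regime, where it is irrelevant. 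The paper disposes of both conditional inequalities without any further conic computation, by using the $\sigma$-stability of the line bundles together with the sheaf inclusions $\cO_S(-1,-1)\su\cO_S$ and $\cO_S(-2,-1),\ \cO_S(-1,-2)\su\cO_S(-1,-1)$, which give sub/quotient relations in the heart forcing the required strict inequalities wherever the relevant objects coexist in $\cA$. You should replace your unconditional nesting argument with this case split (or with sign checks performed separately on each side of the line $a(y+1)+b(x+1)=0$); as written, your step (b) attempts to prove a false statement.
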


\begin{proof}

By the definition of the hearts $\cA$ associated to $\sigma=(Z,\cA)\in \stabdivS$, if $L$ is a line bundle on $S$ then either $L$ or $L[1]$ is in $\cA$. Also, if $G\in\cA$ is  $\sigma$-stable, then after rotating $\sigma$ with $0 < \phi< 1$, we have either $G$ or $G[1]$ in $\cA[\phi]$. Thus, to rotate $\sigma$ to $\sigma[\phi]$ and have $\cO_S,\ \cO_S(-2,-1)[2],\ \cO_S(-1,-2)[2] \in \cA[\phi]$ we must have 
\begin{equation}
\label{eqn:htp1p1}
\cO_S,\ \cO_S(-2,-1)[1],\ \cO_S(-1,-2)[1]\in\cA.
\end{equation}

To ensure that rotating does not force $\cO_S[1]\in\cA_\phi$ we must also have
\begin{equation}
\label{eqn:OandBottom}
\beta(\cO_S)>\beta(\cO_S(-2,-1)[1]),\ \beta(\cO_S(-1,-2)[1]).
\end{equation}

For any such $\sigma$, either $\cO_S(-1,-1)$ or $\cO_S(-1,-1)[1]$ is in $\cA$. If $\cO_S(-1,-1)\in\cA$, then we must rotate so that $\cO_S(-1,-1)[1]\in\cA[\phi]$. But then to ensure (as above) that rotating does not force $\cO_S[1]\in\cA[\phi]$ we must have
\begin{equation}
\label{eqn:Oand-1-1}
\beta(\cO_S)>\beta(\cO_S(-1,-1))
\end{equation}
Similarly, if $\cO_S(-1,-1)[1]\in\cA$, then we must rotate so that $\cO_S(-1,-1)[1]\in\cA_\phi$. To ensure that rotating does not force $\cO_S(-1,-1)[2]\in\cA[\phi]$ we must have
\begin{equation}
\label{eqn:-1-1andBottom}
\beta(\cO_S(-1,-1)[1])>\beta(\cO_S(-2,-1)[1]),\ \beta(\cO_S(-1,-2)[1])
\end{equation}

If $\sigma$ satisfies conditions (\ref{eqn:htp1p1}), (\ref{eqn:OandBottom}), and (\ref{eqn:Oand-1-1}) or if $\sigma$ satisfies conditions (\ref{eqn:htp1p1}), (\ref{eqn:OandBottom}), and (\ref{eqn:-1-1andBottom}), then we may rotate $\sigma$ to $\sigma[\phi]$ so that $\bF'\subset\cA[\phi]$. Then \cite[Lemma 3.16]{macri} implies $\cA[\phi] = \cA_{\bF'}$ and so $\sigma$ is a quiver stability condition. 

We now restrict to a particular slice $\cS_H \subset \stabdivS$ and determine the region $R_{\bF'}\subset \cS_H$ consisting of the $\sigma$ that satisfy one of these two sets of conditions. 

From the definition of the hearts $\cA$, condition (\ref{eqn:htp1p1}) states that for $\sigma \leftrightarrow (x,y,t)$ we must have $(x,y)$  to the left of the line $ay+bx=0$ and on or to the right of the lines $a(y+1)+b(x+2)=0$ and $a(y+2)+b(x+1)=0$. Note that $a,b>0$ implies that these lines are both diagonal (in fact, negatively sloped), so that the notions ``to the left'' and ``to the right'' are sensible.
(For the pictures below, we chose $a=2$ and $b=1$.)

\begin{figure}[htb]
\centering
\includegraphics[scale=.4]{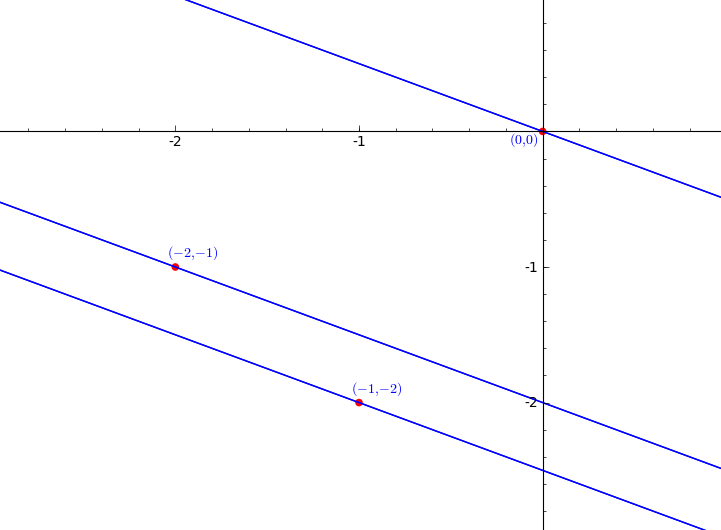}
\caption{Lines in $xy$-plane which determine the inclusion of $\cO_S,\ \cO_S(-2,-1)$ and $\cO_S(-1,-2)$ in $\cA$.}
\label{fig:slopelinesp1p1}
\end{figure}

To understand condition (\ref{eqn:OandBottom}), let us look at the walls $\cW(\cO_S,\cO_S(-2,-1)[1])$ and $\cW(\cO_S,\cO_S(-1,-2)[1])$.
We are working in $\cS_H$, which means that we have fixed an ample $H=aD_1+bD_2$, and we are considering stability conditions $\sigma$ depending on $D=xD_1+yD_2$ and $tH=taD_1+tbD_2$.
We have that
$$ Z(\cO_S) = (t^2ab - xy) - it(bx+ay), \textrm{ and } $$
$$ Z(\cO_S(-2,-1)[1]) = 2 + x + 2y - (t^2ab - xy) + it(a+2b+bx+ay). $$
The equation of the wall $\cW(\cO_S,\cO_S(-2,-1)[1])$, which is given by $\beta(\cO_S)=\beta(\cO_S(-2,-1)[1])$, simplifies to
$$ t^2ab(a+2b) + 2a(y^2+y) + b(x^2+2x) = 0, $$
which is an ellipsoid in the space $\cS_H$ parametrized by $x,y,t$.
The region where $\beta(\cO_S)>\beta(\cO_S(-2,-1)[1])$ is the region inside the ellipsoid.
A similar calculation gives us the following equation for $\cW(\cO_S,\cO_S(-1,-2)[1])$:
$$ t^2ab(2a+b) + a(y^2+2y) + 2b(x^2+x) = 0. $$

The intersections of the ellipsoidal walls with the $xy$-plane are the ellipses $2a(y^2+y)+b(x^2+2x)=0$ and $a(y^2+2y)+2b(x^2+x)=0$, respectively.
The line $ay+bx=0$ is tangent to both ellipses at $(0,0)$, and the line $a(y+1)+b(x+2)=0$ [resp.\ $a(y+2)+b(x+1)=0$] is tangent to the ellipse of $\cW(\cO_S,\cO_S(-2,-1)[1])$ [resp.\ $\cW(\cO_S,\cO_S(-1,-2)[1])$] at $(-2,-1)$ [resp.\ $(-1,-2)$]. Note that the vertical planes in $\cS_H$ over these lines do not intersect the region inside the two ellipsoidal walls, so any $(x,y,t)$ in the region inside both walls automatically satisfies condition (\ref{eqn:htp1p1}) (see Figure \ref{fig:twoellipsesp1p1}).

\begin{figure}[htb]
\centering
\includegraphics[scale=.4]{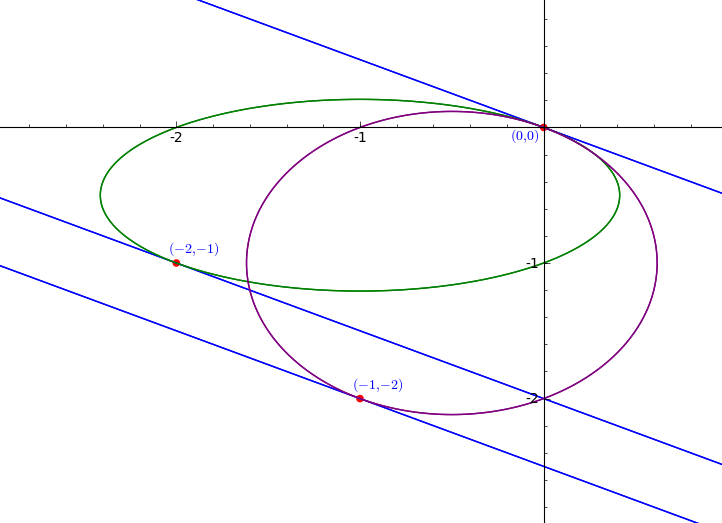}
\caption{Restriction of the walls $\cW(\cO_S,\cO_S(-2,-1)[1])$ and $\cW(\cO_S,\cO_S(-1,-2)[1])$ to the $xy$-plane.}
\label{fig:twoellipsesp1p1}
\end{figure}

We now show that conditions (\ref{eqn:Oand-1-1}) and (\ref{eqn:-1-1andBottom}) add no other restrictions. If $\cO_S(-1,-1)\in\cA$ then $\cO_S\in\cA$, and since $\cO_S(-1,-1)\su\cO_S$ and $\cO_S$ is $\sigma$-stable, we must have that $\beta(\cO_S)>\beta(\cO_S(-1,-1))$, as needed in condition (\ref{eqn:Oand-1-1}).
Similarly, if $\cO_S(-1,-1)[1]\in\cA$, then $\cO_S(-2,-1)[1],\ \cO_S(-1,-2)[1]\in\cA$. Moreover, since $\cO_S(-1,-1)[1]$ is $\sigma$-stable, and $\cO_S(-2,-1)[1]$ and has $\cO_S(-1,-2)[1]$ as subobjects, we must have $\beta(\cO_S(-1,-1)[1])>\beta(\cO_S(-2,-1)[1])$ and $\beta(\cO_S(-1,-1)[1])> \beta(\cO_S(-1,-2)[1])$ as needed in condition (\ref{eqn:-1-1andBottom}). 

Therefore the quiver region $R_{\bF'}\subset\cS_H$ is the intersection of the ellipsoidal regions bounded by the walls $\cW(\cO_S,\cO_S(-2,-1)[1])$ and $\cW(\cO_S,\cO_S(-1,-2)[1])$, as we set out to show.
\end{proof}

The projection of the quiver region $R_{\bF'}$ onto the $xy$-plane is a region containing a unit square with three corners cut off: $U=[-1,0]\times[-1,0]\smallsetminus\{(0,0), (-1,0), (0,-1)\}$. The analogous region $U(p,q)$ associated to the quiver region $R_{\bF'\otimes\cO_S(p,q)}$ is $U+(p,q)$, where the $+$ indicates component-wise addition, and together the regions $U(p,q)$ cover the entire $xy$-plane. Recalling the argument given in Section \ref{strategy}, we have shown the following.

\begin{pro}
\label{pro:ppconj}
Conjecture \ref{con:main} holds for $S=\PP$. In particular, if the invariants $v$ satisfy the Bogomolov inequality, then every moduli space $\cM_\sigma(v)$ is isomorphic to a moduli space $\cM_{\sigma'}(v)$ where $\sigma'$ is in a quiver region $R_{\bF'\otimes\cO_S(p,q)}$, and hence $\cM_\sigma(v)$ is projective.
\end{pro}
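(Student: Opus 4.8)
The plan is to feed the Lemma just established into the wall-crossing strategy of Section \ref{strategy}, propagating projectivity from the quiver regions to all of $\stabdivS$. Fix Bogomolov invariants $v$ and an arbitrary $\sigma = \sigma_{D,H} \in \stabdivS$. The ample direction of $H$ determines a slice $\cS_H \ni \sigma$, and within $\cS_H$ the point $\sigma \leftrightarrow (x,y,t)$ lies in a unique vertical plane $\Pi_u$. By Maciocia's nestedness result \cite[Proposition 2.6]{MaciociaWalls}, the walls $\cW(v,\_)\cap\Pi_u$ are disjoint nested semicircles (or vertical lines), so $\sigma$ lies on a single such wall $\cW$ or in a single chamber bounded by them. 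Since the walls $\cW(v,\_)$ depend only on $v$, moving $\sigma$ along $\cW$ (or within its chamber) crosses no wall for $v$, whence $\cM_\sigma(v) = \cM_{\sigma'}(v)$ for every $\sigma'$ reached in this way.

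It then suffices to exhibit one such $\sigma'$ lying in a quiver region. Here I would use the covering observation recorded just before the statement: the projection of $R_{\bF'}$ to the $xy$-plane contains $U = [-1,0]\times[-1,0]\smallsetminus\{(0,0),(-1,0),(0,-1)\}$, and the translates $U(p,q) = U + (p,q)$ tile $\bR^2$ (the three discarded corners are precisely what lets the neighbouring tile cover each lattice point). Tensoring $\bF'$ by $\cO_S(p,q)$ preserves the quiver and Macr\`i's identification \cite[Lemma 3.16]{macri}, so each $R_{\bF'\otimes\cO_S(p,q)}$ is again a genuine quiver region, cut out by the corresponding pair of ellipsoidal walls and containing a slab $0 < t < \epsilon(x,y)$ over every point $(x,y)$ in the interior of its footprint. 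A nested semicircle descends to $t = 0$ at each of its two endpoints on the $xy$-plane; since the footprints cover $\bR^2$, a low-$t$ sub-arc of $\cW$ (respectively a low-$t$ portion of the chamber) lands over an interior footprint point and hence inside some ellipsoidal quiver region $R_{\bF'\otimes\cO_S(p,q)}$. This produces the required quiver stability condition $\sigma'$.

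Finally I would close the loop exactly as in Section \ref{strategy}. For the quiver stability condition $\sigma'$, a rotation by a suitable $\phi$ gives $\bF'\otimes\cO_S(p,q)\subset\cA[\phi]$, so $\cA[\phi] = \cA_{\bF'\otimes\cO_S(p,q)}$ by \cite[Lemma 3.16]{macri}; under this identification $\sigma'$-stability coincides with King's $\chi$-stability of representations of the associated quiver \cite{kingquiver} for the dimension vector determined by $v$, following \cite[Proposition 8.1]{ABCH}. Hence $\cM_{\sigma'}(v)$ is projective, and $\cM_\sigma(v) = \cM_{\sigma'}(v)$ finishes the proof.

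The step I expect to be the main obstacle is the geometric claim that the wall (or chamber) through an \emph{arbitrary} $\sigma$ --- in particular one of very large $t$ --- genuinely dips into the interior of one of the bounded ellipsoidal quiver regions rather than threading between them. Making this precise requires the low-$t$ asymptotics of both sides: that each $R_{\bF'\otimes\cO_S(p,q)}$ reaches all the way down to $t = 0$ over the interior of its footprint, and that a descending semicircle, whose footprint is a full segment of the line $\Pi_u \cap \{t=0\}$, cannot avoid every footprint interior. The remark that ``to understand a wall it suffices to consider its intersection with the $xy$-plane'' is what reduces this to the covering $\bigcup_{p,q} U(p,q) = \bR^2$; the care lies in handling the boundaries of the individual footprints and the endpoints of the semicircle, where $t \to 0$.
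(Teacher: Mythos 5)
Your proposal is correct and follows essentially the same route as the paper: the paper's proof consists precisely of the covering observation $\bigcup_{p,q}U(p,q)=\bR^2$ combined with the Section \ref{strategy} argument (Maciocia's nested semicircular walls in $\Pi_u$, constancy of $\cM_\sigma(v)$ along a wall or chamber, and the King/Macr\`i identification inside a quiver region). You in fact supply more detail than the paper does on the low-$t$ asymptotics and on why the descending semicircle must enter the interior of some footprint, which is a welcome tightening rather than a deviation.
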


%
%
%
%


\section{Proof of the conjecture for $S=Bl_p\bP^2$}\label{proofblpP2}

In this section, we determine a suitable quiver region in $\stabdivS$ for $S=Bl_p\bP^2$. The considerations are similar to those for $S=\PP$, but with two exceptions: First, instead of a single quiver region, we find two quiver regions that together cover a ``unit region,'' and second, since the relevant hearts contain a torsion sheaf as one of the generators, the stability of those sheaves must be understood.

Let $H$ be the strict transform of the hyperplane class in $\bP^2$, let $E$ be the exceptional divisor, and let $F=H-E$.
We then have $H^2=1$, $E^2=-1$, $H.E=0$, $F^2=0$, and $E.F=1$.
The cone of effective curves on $S$ is the cone of non-negative linear combinations of $E$ and $F$.

A Bridgeland stability condition $\sigma_{D,H} \in \stabdivS$ is determined by real divisor classes $D=xE+yF$ and $H=aE+bF$, with $H$ ample. By the Nakai-Moishezon criterion, $H$ is ample iff $b>a>0$.

For a fixed $D,H$ pair, the corresponding Bridgeland stability condition $\sigma_{D,H}=\sigma=(Z,\cA)$ has heart $\cA$ generated by
$\mu_H$-stable sheaves $G$ of slope $\mu_H(G)>bx+ay-ax$,
torsion sheaves, and
objects of the from $G[1]$, where $G$ is a $\mu_H$-stable sheaf of slope $\mu_H(G)\leq bx+ay-ax$.

The central charge $Z$ is defined as
$$ Z(G) = -\ch_2(G) + d_Fx + d_Ey - d_Ex + r (ab - xy) + \frac{r}{2}(x^2 - a^2) $$
$$ + i \left( d_Fa + d_Eb - d_Ea - r (bx + ay - ax) \right). $$
where $\rk(G)=r$, and $c_1(G)=d_EE+d_FF$.

The slices $\cS_H\subset \stabdivS$ defined in Section \ref{strategy} are given here by  $\cS_H = \{ \sigma_{D,tH} | D = xE + yF \text{ and } t>0\}$. We identify the Bridgeland stability condition $\sigma=\sigma_{D,tH}$ where $D = xE + yF$ with the coordinates $(x,y,t)$. As before, we often project to the the $xy$-plane and only consider the $x$ and $y$ coordinates.


\subsection{Finding two suitable exceptional collections}

In \cite{helicesondelpezzos}, Bridgeland and Stern give an exceptional collection
$$ \bE = ( \cO_S,\ \cO_S(H-E),\ \cO_S(H),\ \cO_S(2H-E) ), $$ which we rewrite as
$$ \bE = ( \cO_S,\ \cO_S(F),\ \cO_S(E+F),\ \cO_S(E+2F) ). $$

The quiver for $\bE$ is

\begin{center}
\begin{tikzcd}
\overset{\cO_S}{\bullet} \arrow[yshift=0.7ex]{r} \arrow[yshift=-0.7ex]{r} \arrow{dr}
& \overset{\cO_S(F)}{\bullet} \arrow{d} \arrow{dr} \\
& \overset{\cO_S(E+F)}{\bullet} \arrow[yshift=0.7ex]{r} \arrow[yshift=-0.7ex]{r} 
& \overset{\cO_S(E+2F)}{\bullet}
\end{tikzcd}
\end{center}


and the dual collection $\bF$ to $\bE$ is
$$ \bF = ( \cO_S(-E-F)[2],\ \cO_S(-E)[1],\ \cO_S(-F)[1],\ \cO_S ). $$

The quiver region associated to this exceptional collection turns out to be too small for our purposes, i.e., it does not cover a ``unit region'' in the $xy$-plane of $\cS_H$. To cover a unit region, we combine the quiver regions from two  exceptional collections.








The first of these exceptional collections is found replacing $\bE$ (which generates a geometric helix) with its left tilt at $\cO_S(E+2F)$:
$$ \bE' = ( \cO_S,\ L_{\cO_S(F)}L_{\cO_S(E+F)}(\cO_S(E+2F))[-1],\ \cO_S(F),\ \cO_S(E+F) ). $$

Straightforward calculations show that\footnote{See Appendix \ref{E'2} for the calculations.}
$$ \bE' = ( \cO_S,\ G_1,\ \cO_S(F),\ \cO_S(E+F) ), $$
where $G_1$ is a sheaf of rank 2 with $c_1(G_1)=E+F$ and $\ch_2(G_1)=-1/2$. The dual collection $\bF'$ to $\bE'$ is\footnote{See Appendix \ref{F'2} for the calculations.}
$$ \bF' = ( \cO_S(-E-2F)[2],\ \cO_S(E)|_E[1],\ \cO_S(-E-F)[1],\ \cO_S ). $$



The second exceptional collection is obtained from $\bE$ in three steps. We first move $\bE$ one position forward along the helix it generates to obtain the exceptional collection
$$ \hat{\bE} = ( \cO_S(F),\ \cO_S(E+F),\ \cO_S(E+2F),\ \cO_S(2E+3F) ). $$


We then twist $\hat{\bE}$ by $\cO_S(-F)$ to obtain the exceptional collection
$$ \hat{\bE}\otimes\cO_S(-F) = ( \cO_S,\ \cO_S(E),\ \cO_S(E+F),\ \cO_S(2E+2F) ) $$
which has the folowing quiver:
\begin{center}
\begin{tikzcd}
\overset{\cO}{\bullet} \arrow{r} \arrow{dr}
& \overset{\cO(E)}{\bullet} \arrow[xshift=-0.7ex]{d} \arrow[xshift=0.7ex]{d} \\
& \overset{\cO(E+F)}{\bullet} \arrow{r} \arrow[yshift=1ex]{r} \arrow[yshift=-1ex]{r} 
& \overset{\cO(2E+2F)}{\bullet}
\end{tikzcd}
\end{center}


Our (second) desired exceptional collection is the left tilt of $\hat{\bE}\otimes\cO_S(-F)$ at $\cO_S(2E+2F)$,
$$ \bE'' = ( \cO_S,\ \cO_S(E),\ L_{\cO_S(E+F)}(\cO_S(2E+2F))[-1],\ \cO_S(E+F) ), $$
which is equal to\footnote{See Appendix \ref{E''} for the calculations.}
$$ \bE'' = ( \cO_S,\ \cO_S(E),\ G_2,\ \cO_S(E+F) ), $$
where $G_2$ is a sheaf of rank 2 with $c_1(G_2)=E+F$ and $\ch_2(G_2)=-1/2$. The dual collection $\bF''$ to $\bE''$  is\footnote{See Appendix \ref{F''} for the calculations.}
$$ \bF'' = ( \cO_S(-E-2F)[2],\ \cO_S(-F)[1],\ \cO_S(E)|_E,\ \cO_S ). $$

Notice the dual collections $\bF'$ and $\bF''$ contain not only (shifts of) line bundles, but also the torsion sheaf $\cO_S(E)|_E$ (and its shift $\cO_S(E)|_E[1]$).


\subsection{Bridgeland stability of $\cO_S(E)|_E$}
\label{sec:stabofte}

In order to find the quiver regions for the exceptional collections $\bE'$ and $\bE''$, we need to understand the Bridgeland stability of $\TE$ (the stability of line bundles is already understood from \cite{linebundlessurfs}). We prove that $\TE$ is Bridgeland semistable for all $\sigma\in\stabdivS$. To do this, we first study subobjects of $\TE$ in $\cA$ and show that the corresponding walls in $\cS_H\subset\stabdivS$ are hyperboloids or cones. This tells us that our walls must cross certain vertical planes associated to subobjects or quotients, and using Bertram's Lemma we obtain a contradiction.



\subsubsection{Subobjects and walls for $\cO_S(E)|_E$}\label{TEWalls}

Let $G\su\cO_S(E)|_E\in\cA$.
Then there exists a short exact sequence
$$ 0 \lfun G \lfun \cO_S(E)|_E \lfun Q \lfun 0 $$
in $\cA$, where $Q$ is the quotient.
The corresponding long exact sequence of cohomologies tells us that $H^{-1}(G)=0$, and therefore, $G=H^0(G)$ is a sheaf.
In particular, we have the long exact sequence of coherent sheaves
$$ 0 \lfun H^{-1}(Q) \lfun G \lfun \TE \lfun H^0(Q) \lfun 0. $$

We summarize a few useful consequences:

\begin{itemize}
\item
$H^0(Q)$ is either 0 or a torsion sheaf supported in dimension $0$.
Otherwise, $G$ and $H^{-1}(Q)$ would have the same rank and $c_1$ (and thus the same Mumford $H$-slope), making it impossible for $G$ to be in $\cA$ and $H^{-1}(Q)[1]$ to also be in $\cA$.
\item
In particular, if $\rk(G)=0$, then $G$ is a torsion sheaf supported on $E$.
\item
If $\rk(G)>0$, then $G$ must be torsion-free.
Indeed, since $H^{-1}(Q)$ is torsion-free, any torsion that $G$ had would map to $\TE$, and tors$(G)$ would have to be supported on $E$.
Then, $G/$tors$(G)$ and $H^{-1}(Q)$ would have the same $c_1$ and rank, making it impossible for $G\in\cA$ and $H^{-1}(Q)[1]\in\cA$.
\end{itemize}

In a given slice $\cS_H\subset\stabdivS$ for a fixed ample divisor $H=aE+bF$, we are working with stability conditions $\sigma$ depending on $D=xE+yF$ and $tH=taE+tbF$.
We have that
$$ Z(\cO_S(E)|_E) = \frac12 + y - x + i t (b - a), $$
and the equation for the wall $\cW(G,\TE)$ simplifies into
$$ t^2r(a(b-a)(2b-a)) + (a-b)rx^2 - 2arxy + 2ary^2 + $$
$$ + (2bd_F+(b-a)r)x + (r-2d_F)ay + 2ac + ad_E - ad_F - 2bc - bd_E = 0, $$
where $\rk(G)=r$, $c_1(G)=d_EE+d_FF$, and $\ch_2(G)=c$.

If we restrict to any horizontal plane of equation $t=\textrm{constant}$, we obtain
a conic with discriminant
$$ (-2ar)^2 - 4 ((a-b)r) (2ar) = 4a^2r^2 - 8a^2r^2 + 8abr^2 = 4ar^2(2b-a)>0. $$
Therefore, these conics 
are all hyperbolas or cones, and the walls $\cW(G,\TE)$ in $\cS_H$ must be hyperboloids or cones.


\subsubsection{Betram's Lemma}


A key tool in our proof of the stability of $\TE$ is Bertram's Lemma.
We recall here what we need for our case. For more details, we point the reader to \cite[Lemma 6.3]{ABCH} and \cite[Lemma 4.7]{linebundlessurfs}.

Let $G$ be a subobject of $\TE$ of rank $\rk(G)>0$ in $\cA$, and let $Q$ be the quotient. Let $0=G_0\su G_1\su G_2\su\cdots\su G_{n-1}\su G_n=G$ be the Harder-Narasihman filtration of $G$, and let $K_i=G_i/G_{i-1}$ (so that $K_1=G_1$ and $K_n=G/G_{n-1}$).
We have that $\mu_H(K_1)>\mu_H(K_2)>\cdots>\mu_H(K_n)$ and we set $\Kl:=K_n$. Similarly, let $0=G'_0\su G'_1\su G'_2\su\cdots\su G'_{m-1}\su G'_m=H^{-1}(Q)$ be the Harder-Narasihman filtration of $H^{-1}(Q)$, and let $J_i=G'_i/G'_{i-1}$ with $\Jl:=J_1$.

We have that $G\su\TE\in\cA$ iff $\mu_H(\Jl)\leq D.H/H^2<\mu_H(\Kl)$.
Since $D.H/H^2=(ay+(b-a)x)/(2ab-a^2)$, note that for a fixed $\mu\in\bR$ the equation $D.H/H^2 = \mu$ gives a linear equation of the form $(b-a)x+ay=$const, and so defines a vertical plane in $\cS_H$.
We are saying here that $G\su\TE\in\cA$ for a stability condition $\sigma$ if and only if the corresponding point $(x,y,t)$ in $\cS_H$ is between the vertical planes $\mu_H(\Jl)=D.H/H^2$ and $\mu_H(\Kl)=D.H/H^2$ (the point could be on the first plane, but not the second one).

At $\mu_H(\Kl)=D.H/H^2$, we will consider the natural subsheaf $G_{n-1}\su G$.
At $\mu_H(\Jl)=D.H/H^2$, we will consider the natural quotient sheaf $G\sur G/\Jl$ (note that, as sheaves, $\Jl=J_1=G'_1\su H^{-1}(Q)\su G$).

\begin{lem}[Bertram's Lemma]\label{BertramLemma}
Fix $D$ and $H$ as above, and let $G\su\TE$ in $\cA$ for some $\sigma=\sigma_{D,H}=(Z,\cA)$ such that $\sigma\in\cW(G,\TE)$.
\begin{itemize}
\item[$(1)$]
If, for some $u$, $\cW(G,\TE)\cap\Pi_u$ intersects the line $D.H/H^2=\mu_H(\Kl)$ for $t>0$, then $\beta(G_{n-1})>\beta(G)$ at $\sigma$, with $G_{n-1}\su\TE$ in $\cA$ $($in particular, $G$ is not $\mu_H$-semistable$)$.
\item[$(2)$]
If, for some $u$, $\cW(G,\TE)\cap\Pi_u$ intersects the line $D.H/H^2=\mu_H(\Jl)$ for $t>0$, then $\beta(G/\Jl)>\beta(G)$ at $\sigma$, with $G/\Jl\su\TE$ in $\cA$.
\end{itemize}
\end{lem}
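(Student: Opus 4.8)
The plan is to reduce each of $(1)$ and $(2)$ to the positivity of a single central-charge determinant, to pin the sign of that determinant on the boundary plane where one Harder--Narasimhan factor acquires vanishing imaginary central charge (using the positivity property of $Z$), and to transport that sign to $\sigma$ along the connected wall. For $(1)$ I would start from the Harder--Narasimhan sequence $0\to G_{n-1}\to G\to\Kl\to 0$. On the valid strip between the planes $\mu_H(\Jl)=D.H/H^2$ and $\mu_H(\Kl)=D.H/H^2$ every HN factor of $G$, hence $G$, $G_{n-1}$ and $\Kl$, lies in $\cT\su\cA$ with strictly positive imaginary central charge, so this is a short exact sequence in $\cA$ and in particular $G_{n-1}\su\TE$ in $\cA$. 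Additivity of $Z$ gives $Z(G)=Z(G_{n-1})+Z(\Kl)$, and a direct manipulation of $\beta=-\Real Z/\Im Z$ yields
$$ \beta(G_{n-1})-\beta(G)=\frac{\Real Z(\Kl)\,\Im Z(G_{n-1})-\Real Z(G_{n-1})\,\Im Z(\Kl)}{\Im Z(G_{n-1})\,\Im Z(G)}, $$
whose denominator is positive on the wall, since there $Z(G)=\lambda Z(\TE)$ for some $\lambda>0$ and $\Im Z(\TE)=t(b-a)>0$. Thus it suffices to show the numerator $\Delta:=\Real Z(\Kl)\,\Im Z(G_{n-1})-\Real Z(G_{n-1})\,\Im Z(\Kl)$ is positive at $\sigma$; equivalently, since $\beta(G)$ is forced between $\beta(G_{n-1})$ and $\beta(\Kl)$, that $\beta(\Kl)<\beta(G)$.

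The boundary input pins the sign. On the plane $P_K=\{\mu_H(\Kl)=D.H/H^2\}$ one has $\Im Z(\Kl)=0$; since $\Kl$ is $\mu_H$-semistable of this slope it lies in $\cF$, so $\Kl[1]\in\cA$, and the positivity property of $Z$ applied to $\Kl[1]$ forces $\Real Z(\Kl[1])<0$, i.e.\ $\Real Z(\Kl)>0$. As the HN factors of $G_{n-1}$ all have slope exceeding $\mu_H(\Kl)$, also $\Im Z(G_{n-1})>0$ there, so $\Delta=\Real Z(\Kl)\,\Im Z(G_{n-1})>0$ at the point where $\cW(G,\TE)$ meets $P_K$; the hypothesis of $(1)$ guarantees such a point exists with $t>0$. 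Note this also forces $n\ge 2$: at that point the wall gives $\Im Z(G)>0$, which is incompatible with the equality $\Im Z(\Kl)=\Im Z(G)=0$ that $n=1$ would require; hence $G$ is not $\mu_H$-semistable.

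Transporting this positivity to the given $\sigma$ is the step I expect to be the main obstacle. The determinant $\Delta$ vanishes exactly on the auxiliary wall $\cW(G_{n-1},\Kl)$ (where $Z(G_{n-1})\parallel Z(\Kl)$), so $\Delta$ keeps its sign along $\cW(G,\TE)$ provided the two walls do not cross inside the valid strip; a crossing would force $\beta(G_{n-1})=\beta(G)=\beta(\Kl)=\beta(\TE)$ simultaneously. Restricting to the plane $\Pi_u$ through $\sigma$, Section~\ref{TEWalls} shows $\cW(G,\TE)\cap\Pi_u$ is a single conic arc, so $\Delta$ is continuous on it and positive at its $P_K$-endpoint; ruling out an interior zero in the strip, using the explicit conic equations of Section~\ref{TEWalls}, is the delicate point, while everything else is formal. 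Granting it, $\Delta>0$ at $\sigma$, giving $\beta(G_{n-1})>\beta(G)$.

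Part $(2)$ is the mirror image run on the quotient side. Using $0\to\Jl\to G\to G/\Jl\to 0$ with $\Jl=J_1\su H^{-1}(Q)\su G$ and additivity $Z(G)=Z(\Jl)+Z(G/\Jl)$, the same computation reduces $\beta(G/\Jl)>\beta(G)$ to the positivity of $\Real Z(\Jl)\,\Im Z(G/\Jl)-\Real Z(G/\Jl)\,\Im Z(\Jl)$. On $P_J=\{\mu_H(\Jl)=D.H/H^2\}$ one has $\Im Z(\Jl)=0$ and $\Jl\in\cF$, so positivity again gives $\Real Z(\Jl)>0$, while $\Im Z(G/\Jl)=\Im Z(G)>0$ on the wall; the determinant is therefore positive at the $P_J$-boundary, and the identical propagation argument carries it to $\sigma$. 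The membership $G/\Jl\su\TE$ in $\cA$ follows by tracking which cohomology pieces survive in $\cA$ as $\Jl$ exits across $P_J$, exactly as $G_{n-1}\su\TE$ was obtained in $(1)$.
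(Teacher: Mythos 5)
Your overall skeleton is the right one, and it is essentially the argument the paper has in mind (the paper itself gives no details, deferring entirely to \cite[Lemma 4.7]{linebundlessurfs}): reduce the slope comparison to the sign of the determinant $\Delta=\Real Z(\Kl)\Im Z(G_{n-1})-\Real Z(G_{n-1})\Im Z(\Kl)$, pin that sign at the boundary plane where $\Im Z(\Kl)=0$ using the positivity property of $Z$ applied to $\Kl[1]\in\cA$, and propagate along the wall. Your boundary computation is correct, as is the observation that the hypothesis forces $n\geq 2$. (Minor point: for part $(2)$, the cleanest justification that $G/\Jl\su\TE$ in $\cA$ is not ``tracking cohomology pieces'' but the triangle $G\to G/\Jl\to\Jl[1]$: since $\Jl\in\cF_{D,H}$ at $\sigma$, both $G$ and $\Jl[1]$ lie in $\cA$, so $G/\Jl\in\cA$ by extension-closure, and the cone of $G/\Jl\to\TE$ has $H^{-1}=H^{-1}(Q)/\Jl\in\cF_{D,H}$ and $H^0=H^0(Q)\in\cT_{D,H}$.)

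The genuine gap is the one you flag yourself and then assume away: the transport of the sign of $\Delta$ from the boundary point to $\sigma$. This is not a technicality --- it is the actual content of Bertram's Lemma, and ``granting it'' leaves the lemma unproved. The step fails exactly when the walls $\cW(G,\TE)$ and $\cW(G_{n-1},\TE)$ cross inside the valid strip, and ruling this out is not something you can do by inspecting the conic equations of Section \ref{TEWalls} for a single wall; it requires a non-intersection statement for \emph{pairs} of walls. The tool the paper's framework supplies for this is Maciocia's nestedness: in each plane $\Pi_u$, distinct walls for a fixed object are disjoint nested semicircles, so two walls for $\TE$ (equivalently, for $G$) either coincide or never meet; coincidence is excluded because $\Delta>0$ at the boundary point while $\Delta\equiv 0$ on a coincident wall. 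You should also address a second transport issue you pass over silently: the hypothesis only provides a boundary intersection in \emph{some} plane $\Pi_u$, while $\sigma$ lies in its own plane $\Pi_{u_0}$, so the sign must be carried across planes along the connected hyperboloid or cone $\cW(G,\TE)$ in $\cS_H$, not merely along one semicircular slice. Until both of these are supplied (or explicitly reduced to \cite[Lemma 4.7]{linebundlessurfs}, which is what the authors do), the proof is incomplete at its central step.
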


Note: The $\Pi_u$ in Bertram's Lemma are the vertical walls that we mentioned in Section \ref{strategy}.

\pf
The proof is essentially identical to that of \cite[Lemma 4.7]{linebundlessurfs} and we omit the details.
\qed


\subsubsection{Proof of stability}

We are now ready to prove the stability of $\TE$.

\begin{thm}
\label{thm: testable}
The torsion sheaf $\TE$ is Bridgeland stable for all $\sigma\in\stabdivS$.
\end{thm}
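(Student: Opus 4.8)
The plan is to argue by contradiction, running an induction on the rank of a hypothetical destabilizing subobject. The engine of the descent is Bertram's Lemma (Lemma \ref{BertramLemma}): combined with the fact that the walls $\cW(G,\TE)$ are unbounded (hyperboloids or cones, as established in Section \ref{TEWalls}), it lets me trade a destabilizing subobject $G$ for one of strictly smaller rank, until I reach a rank-$0$ base case that is settled by a direct slope computation.

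Since every $\sigma\in\stabdivS$ lies in some slice $\cS_H$ (take $t=1$), I would fix $\cS_H$ and suppose, for contradiction, that there is a proper nonzero $G\su\TE$ in $\cA$ with $\beta(G)\geq\beta(\TE)$ at some $\sigma$. Recall from Section \ref{TEWalls} that $G$ is automatically a sheaf and that exactly one of two cases occurs: either $\rk(G)=0$, so $G$ is torsion supported on $E$, or $\rk(G)>0$, so $G$ is torsion-free (and then the long exact sequence forces $H^{-1}(Q)\neq0$, since a subsheaf of the torsion sheaf $\TE$ would otherwise have rank $0$). The inductive statement is: for every $r\geq0$, no subobject $G\su\TE$ in $\cA$ of rank $r$ satisfies $\beta(G)\geq\beta(\TE)$ at any $\sigma\in\cS_H$.

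For the base case $r=0$, the sheaf $G$ is a proper subsheaf of $\TE=\cO_E(-1)$ on $E\cong\Proj$. Having torsion-free quotient on $E$ forces $G\cong\cO_E(-1-m)$ with $m\geq1$ (no $0$-dimensional subsheaves occur, as $\cO_E(-1)$ is torsion-free), so $G$ has the same rank and $c_1$ as $\TE$ but $\ch_2(G)=\ch_2(\TE)-m$. Evaluating the central charge $Z$ of Section \ref{proofblpP2} gives $\Im Z(G)=\Im Z(\TE)=b-a>0$ while $\Real Z(G)=\Real Z(\TE)+m$, hence $\beta(G)=\beta(\TE)-m/(b-a)<\beta(\TE)$ for every $\sigma\in\cS_H$, contradicting $\beta(G)\geq\beta(\TE)$. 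For the inductive step, assume the claim for all ranks $<r$ and suppose $G$ of rank $r>0$ destabilizes $\TE$ somewhere. Positioning $\sigma$ on the boundary of the destabilizing locus, I may take $\sigma\in\cW(G,\TE)$, so $\beta(G)=\beta(\TE)$. By Bertram's Lemma the locus where $G\su\TE$ in $\cA$ is the slab between the vertical planes $D.H/H^2=\mu_H(\Jl)$ and $D.H/H^2=\mu_H(\Kl)$; since $\cW(G,\TE)$ is an unbounded hyperboloid or cone, it cannot remain compact inside this slab and must meet one of the two bounding planes at a point with $t>0$. Applying the matching part of Lemma \ref{BertramLemma} there yields a subobject of $\TE$ in $\cA$ — namely $G_{n-1}$ or $G/\Jl$ — with $\beta>\beta(G)=\beta(\TE)$. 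In either case this object has strictly smaller rank (the omitted Harder--Narasimhan factor, being torsion-free of positive rank, drops the rank; and $\Jl\neq0$ since $H^{-1}(Q)\neq0$ when $\rk(G)>0$), so it contradicts the inductive hypothesis. Hence no destabilizing subobject exists and $\TE$ is stable for all $\sigma$.

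The step I expect to be the main obstacle is the geometric input of the inductive step: verifying that the unbounded wall $\cW(G,\TE)$ actually meets one of the critical planes $D.H/H^2=\mu_H(\Kl)$ or $\mu_H(\Jl)$ \emph{within the physical region $t>0$}, rather than escaping only as $t\to\infty$ or into $t<0$, so that Bertram's Lemma genuinely applies. This requires comparing the asymptotic directions of the hyperboloid (controlled by the discriminant computed in Section \ref{TEWalls}) with the orientation of the slab, and pinning down on which branch the crossing occurs. The base-case inequality is a finite computation, but it too must be checked uniformly over the whole slice $\cS_H$, which is why the explicit form of $Z$ from Section \ref{proofblpP2} is used.
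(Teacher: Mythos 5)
Your proposal follows essentially the same route as the paper's proof: induction on the rank of a hypothetical destabilizing subobject $G\su\TE$, a direct central-charge computation for the rank-$0$ base case (the paper phrases it via the zero-dimensional quotient $Q$ with $Z(Q)=-l$, you via $G\cong\cO_E(-1-m)$ — the same calculation), and for the inductive step the observation that the hyperboloid/cone wall $\cW(G,\TE)$ cannot stay inside the slab where $G\su\TE$ in $\cA$, so Bertram's Lemma produces a lower-rank destabilizer. The "main obstacle" you flag (that the crossing of the critical plane occurs at $t>0$) is handled in the paper at the same level of detail, by slicing at the fixed level $t=t_0>0$ where the hyperbola through $\sigma$ has asymptotes transverse to the bounding planes.
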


\pf
We prove that $G\su\TE$ in $\cA$ cannot satisfy $\beta(G)\geq\beta(\TE)$ at $\sigma$ by induction on $\rk(G)$ (starting at $\rk(G)=0$). 

Let $\sigma=(Z,\cA)\in\stabdivS$ and let $G$ be a torsion sheaf of rank $0$ such that $G\su\TE$ in $\cA$.
As we saw above, we must have $c_1(G)=E$.
The quotient of $\TE$ by $G$ in $\cA$ must be a torsion sheaf $Q$ supported on a scheme $P$ of dimension $0$.
In particular, it would have $c_1(Q)=0$ and $\ch_2(Q)=l\geq0$ (with equality only if $G=\TE$), where $l$ is the length of $P$. In this case, $Z(Q)=-l$, and therefore $\beta(\TE) > \beta(G)$.

Assume now that the result is true for all subobjects of rank $\leq r-1$, and let $G\su\TE$ be a subobject of $\TE$ in $\cA$ with $\rk(G)=r>0$.

Assume by way of contradiction that there is a $\sigma\in\cS_H\subset\stabdivS$ where $G$ satisfies $\beta(G)\geq\beta(\TE)$.

In $\cS_H$, the wall $\cW(G,\TE)$ is a hyperboloid or a cone, with $\beta(G)\geq\beta(\TE)$ satisfied on or inside the wall.
Moreover, for $G\in\cA$, we must have that the $\sigma$ stability condition is between the vertical planes $D.H/H^2=\mu_H(\Kl)$ or $D.H/H^2=\mu_H(\Jl)$.
Therefore, the hyperboloid or cone wall is forced to cross at least one of those two vertical planes. In either case, since $\bigcup_{u\in\bR}\Pi_u=\cS_H$, there exists a $u$ such that $\Pi_u \cap \cW(G,\TE) \neq \varnothing$, and Bertram's Lemma gives us a subobject $\hat{G} (=G_{n-1}$ or $G/\Jl)$ of $\TE$ satisfying $\beta(\hat{G})>\beta(G)\geq\beta(\TE)$. Since $\rk(\hat{G})<\rk(G)$, this contradicts our induction hypothesis.
\qed


\subsection{Quiver region for $\bE'$}

We will now calculate the quiver region for the exceptional collection $$ \bE' = ( \cO_S,\ G_1,\ \cO_S(F),\ \cO_S(E+F) ).$$ For this, we must find the stability conditions $\sigma$ that can be rotated so that the objects in associated dual collection $\bF'$ are in the new heart and are $\sigma$-stable. Our considerations proceed similarly to those made for the case $S=\PP$, but with the exception that line bundles (and their shifts) are no longer always Bridgeland stable. In addition, the walls we consider are no longer all ellipsoids, but can be hyperboloids.

We prove the following, where we restrict to a slice $\cS_H \subset \stabdivS$.

\begin{lem}
The quiver region $R_{\bF'} \subset \cS_H \subset \stabdivS$ associated to $$ \bF' = ( \cO_S(-E-2F)[2],\ \cO_S(E)|_E[1],\ \cO_S(-E-F)[1],\ \cO_S ) $$ is the region striclty inside both of the walls $\cW(\cO_S, \TE)$, which is a hyperboloid, and $\cW(\cO_S,\cO_S(-E-2F)[1])$, which is an ellipsoid.
\end{lem}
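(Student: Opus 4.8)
The plan is to mirror the proof of the corresponding lemma for $S=\PP$, with the torsion sheaf $\TE$ and the line bundle $\cO_S(-E-2F)$ playing the roles of the two ``bottom'' generators. Writing $\sigma=(Z,\cA)$, I first translate the requirement that some rotation $\sigma[\phi]$ (with $0<\phi<1$) satisfy $\bF'\subset\cA[\phi]$ into conditions on $\sigma$ itself. Since $\TE$ is a torsion sheaf it lies in $\cA$ for \emph{every} $\sigma$, so $\TE[1]\in\cA[\phi]$ merely forces $\TE$ to be shifted, and the only genuine membership requirements are $\cO_S\in\cA$ and $\cO_S(-E-2F)[1]\in\cA$ (together with $\cO_S(-E-F)$ or its shift lying in $\cA$). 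For a rotation to place $\cO_S$ at shift $0$, and both $\TE$ and $\cO_S(-E-2F)[1]$ one shift higher in $\cA[\phi]$, the object $\cO_S$ must have strictly larger phase than the two objects that are shifted, i.e.
$$\beta(\cO_S)>\beta(\TE)\quad\text{and}\quad\beta(\cO_S)>\beta(\cO_S(-E-2F)[1]).$$
These two inequalities cut out exactly the strict interiors of the walls $\cW(\cO_S,\TE)$ and $\cW(\cO_S,\cO_S(-E-2F)[1])$, the region named in the lemma.

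Next I would identify the shapes of the two walls. For $\cW(\cO_S,\TE)$ I apply the central-charge computation of Section \ref{TEWalls} with $G=\cO_S$ (so $r=1$, $c_1=0$, $\ch_2=0$): the horizontal slices $t=\text{const}$ give a conic whose discriminant is $4a(2b-a)>0$, so the wall is a hyperboloid and $\beta(\cO_S)>\beta(\TE)$ holds on its strict interior. For $\cW(\cO_S,\cO_S(-E-2F)[1])$ a direct computation with the central charge $Z$ of Section \ref{proofblpP2} (entirely analogous to the $\PP$ computation) yields a conic with negative discriminant, hence an ellipsoid, whose strict interior is the region $\beta(\cO_S)>\beta(\cO_S(-E-2F)[1])$.

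It then remains to check that, on the intersection of these two interiors, the remaining conditions are automatic. The membership $\cO_S,\cO_S(-E-2F)[1]\in\cA$ should follow because the vertical planes over the heart-boundary lines (the loci $\mu_H(\cdot)=D.H/H^2$ for the relevant line bundles) are tangent to, and do not cut into, the doubly-interior region, exactly as the line $ay+bx=0$ and its translates did in the $\PP$ proof. For the middle generator $\cO_S(-E-F)[1]$ I argue by cases and invoke stability. If $\cO_S(-E-F)\in\cA$, then $\cO_S\in\cA$ and the inclusion $\cO_S(-E-F)\hookrightarrow\cO_S$ (a section of $\cO_S(E+F)=\cO_S(H)$) together with $\sigma$-stability of $\cO_S$ gives $\beta(\cO_S)>\beta(\cO_S(-E-F))$. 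If instead $\cO_S(-E-F)[1]\in\cA$, I pass to $\cA$ in the exact sequences
$$0\to\cO_S(-E-2F)\to\cO_S(-E-F)\to Q\to0, \qquad 0\to\cO_S(-E-F)\to\cO_S(-F)\to\TE\to0,$$
where $Q$ is torsion on a fibre $C\in|F|$ and the second sequence uses $\cO_S(-F)|_E\cong\cO_E(-1)\cong\TE$; the $\sigma$-stability of the line bundles \cite{linebundlessurfs} and of $\TE$ (Theorem \ref{thm: testable}) then forces $\beta(\cO_S(-E-F)[1])>\beta(\cO_S(-E-2F)[1])$ and $\beta(\cO_S(-E-F)[1])>\beta(\TE)$. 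With all four generators in $\cA[\phi]$, \cite[Lemma 3.16]{macri} gives $\cA[\phi]=\cA_{\bF'}$, so $\sigma$ is a quiver stability condition and $R_{\bF'}$ is exactly the doubly-interior region.

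The main obstacle I expect is the torsion generator $\TE$. Unlike the $\PP$ case, where every relevant wall was an ellipsoid, here one bounding wall $\cW(\cO_S,\TE)$ is a hyperboloid, so I must verify both that its ``strict interior'' is genuinely the $\beta(\cO_S)>\beta(\TE)$ side and that the heart-boundary tangency argument still pins down the region despite the change of shape. Equally essential is that the middle-object case analysis rests on the $\sigma$-stability of $\TE$ for \emph{all} $\sigma$ (Theorem \ref{thm: testable}), which is itself the delicate input proved via Bertram's Lemma; without it the torsion subobject or quotient appearing in the second exact sequence could not be controlled.
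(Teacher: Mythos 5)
Your overall skeleton matches the paper's: translate the rotation requirement into membership and slope conditions, identify the two bounding walls (hyperboloid and ellipsoid), and check that the conditions on the middle generator $\cO_S(-E-F)$ add nothing new. However, there is a genuine gap: you never verify that the objects of $\bF'$ are actually $\sigma$-stable for $\sigma$ in the claimed region, and this is not optional. The rotated heart $\cA[\phi]$ is by definition generated by $\sigma$-\emph{semistable} objects of $\cA$ (and their shifts), so an unstable generator would not be placed in $\cA[\phi]$ by rotation at all; moreover, on $Bl_p\Ptoo$ line bundles are \emph{not} Bridgeland stable for every $\sigma\in\stabdivS$ (unlike on $\PP$, where \cite[Theorem 1.1]{linebundlessurfs} makes this automatic). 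You cite \cite{linebundlessurfs} as though it granted stability outright, but what it actually provides here is a criterion: $L$ can be destabilized only by $L(-E)\hookrightarrow L$, and $L[1]$ only by $L[1]\twoheadrightarrow L(E)[1]$. The paper therefore spends the second half of its proof computing the walls $\cW(\cO_S(-E),\cO_S)$, $\cW(\cO_S(-E-2F)[1],\cO_S(-2F)[1])$, $\cW(\cO_S(-2E-F),\cO_S(-E-F))$ and $\cW(\cO_S(-E-F)[1],\cO_S(-F)[1])$ and showing that each avoids the region $R$. Without this, the equality $R=R_{\bF'}$ is not established.

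Two smaller points. First, in the case $\cO_S(-E-F)\in\cA$ you deduce $\beta(\cO_S)>\beta(\cO_S(-E-F))$ from the $\sigma$-stability of $\cO_S$, which is precisely one of the facts you have not yet proved on this surface (the paper instead invokes $(-E-F)^2>0$ and \cite[Proposition 5.1, Remark 5.2]{linebundlessurfs}, which needs no stability input). Second, your derivation of $\beta(\cO_S(-E-F)[1])>\beta(\TE)$ from the sequence $0\to\cO_S(-E-F)\to\cO_S(-F)\to\TE\to0$ presupposes both that $\cO_S(-F)[1]\in\cA$ and that $\cO_S(-E-F)[1]$ is $\sigma$-stable; the paper instead computes the wall $\cW(\cO_S(-E-F)[1],\TE)$ explicitly and shows its relevant branch lies outside $R$ in the locus where $\cO_S(-E-F)[1]\in\cA$. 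These can likely be repaired, but as written they lean on unproved stability statements.
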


\begin{proof}

By the definition of the hearts $\cA$ associated to $\sigma=(Z,\cA) \in \stabdivS$, we have $\TE\in\cA$ for all $\sigma$. For any line bundle $L$ on $S$, either $L$ or $L[1]$ is in $\cA$. If $G\in \cA$, then rotating $\sigma$ with $0< \phi < 1$, we have either $G$ or $G[1]$ in $\cA[\phi]$. To rotate $\sigma$ to $\sigma[\phi]$ and have $\cO_S,\ \cO_S(-E-2F)[2] \in \cA[\phi]$ we must have

\begin{equation}
\label{eqn:0-1-2inht}
\cO_S,\ \cO_S(-E-2F)[1]\in\cA.
\end{equation}

To ensure that rotating yeilds $\cO_S(-E-2F)[2]$ and $\TE[1]$ in $\cA[\phi]$ (but not $\cO_S[1]$) we must also have

\begin{equation}
\label{eqn:te-1-2relslope}
\beta(\cO_S) > \beta(\TE),\ \beta(\cO_S(-E-2F)[1]).
\end{equation}

For any such $\sigma$, either $\cO_S(-E-F)$ or $\cO_S(-E-F)[1]$ is in $\cA$. If $\cO_S(-E-F) \in \cA$ then we must rotate so that $\cO_S(-E-F)[1]\in\cA[\phi]$. But then to ensure (as above) that rotating does not force $\cO_S[1] \in\cA[\phi]$ we must have

\begin{equation}
\label{eqn:-1-1relslope}
\beta(\cO_S)>\beta(\cO_S(-E-F))
\end{equation}

Similarly, if $\cO_S(-E-F)[1]\in\cA$, then we must rotate in such a way to keep $\cO_S(-E-F)[1]\in\cA[\phi]$. Since $\TE$ and $\cO_S(-E-2F)[1]$ must be shifted in the rotation, this gives

\begin{equation}
\label{eqn:-1-1[1]relslope}
\beta(\cO_S(-E-F)[1])>\beta(\TE),\ \beta(\cO_S(-E-2F)[1]).
\end{equation}

If $\sigma$ satisfies conditions conditions (\ref{eqn:0-1-2inht}), (\ref{eqn:te-1-2relslope}) and (\ref{eqn:-1-1relslope}) or if $\sigma$ satisfies conditions (\ref{eqn:0-1-2inht}), (\ref{eqn:te-1-2relslope}) and (\ref{eqn:-1-1[1]relslope}), and in either case if it is also true that each object of $\bF'$ is $\sigma$-stable, then $\cA[\phi]=\cA_{\bF'}$ and so $\sigma$ is a quiver stability condition.

\begin{figure}[h]
\centering
\includegraphics[scale=.4]{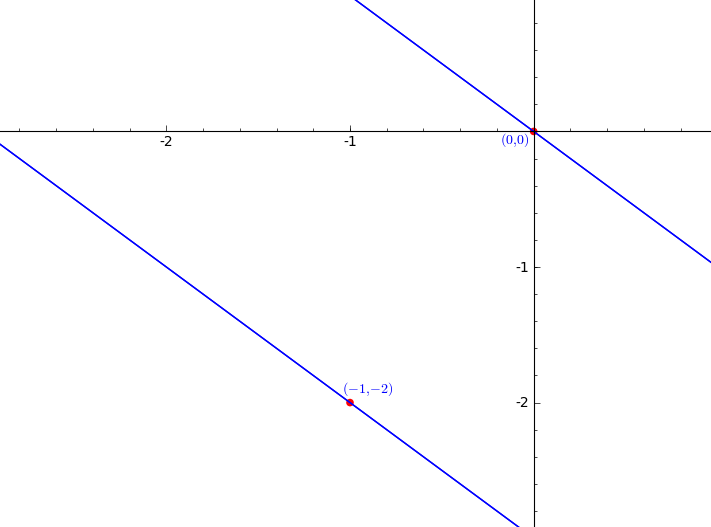}
\caption{Lines in the $xy$-plane that determine the inclusion of $\cO_S$ and $\cO(-E-2F)$ in $\cA$.}
\label{fig:slopelinesblp2}
\end{figure}

\textit{We now find the region in $\cS_H$ that the above conditions define.}

From the definition of the hearts $\cA$, condition (\ref{eqn:0-1-2inht}) states that for $\sigma \leftrightarrow (x,y,t)$ we must have $(x,y)$
to the left of the line $ay+(b-a)x=0$ and on or to the right of the line $a(y+2)+(b-a)(x+1)=0$. Note that since $b>a>0$, these lines are both negatively sloped and so the notions of ``to the left'' and ``to the right'' are sensible (see Figure \ref{fig:slopelinesblp2}).

To understand condition (\ref{eqn:te-1-2relslope}), we need to study the two walls $\cW(\cO_S,\TE)$ and $\cW(\cO_S,\cO_S(-E-2F)[1])$ in $\cS_H$.
In Section \ref{TEWalls}, we already saw the equation for a wall of the form $\cW(G,\TE)$, and so the equation for $\cW(\cO_S,\TE)$ is
$$ t^2 (a^3 - 3a^2b + 2ab^2) + (a-b)x^2 - 2axy + 2ay^2 + (b-a)x + ay = 0. $$
It is a hyperboloid (see Section \ref{TEWalls}), and its restriction to the $xy$-plane is a hyperbola passing through $(0,0)$ and $(-1,-2)$.
For the second wall, we have that
$$ Z(\cO_S)=t^2ab-xy+\frac12(x^2-t^2a^2)-it(bx+ay-ax), \textrm{ and } $$
$$ Z(\cO_S(-E-2F)[1])=\frac32+x+y-t^2ab+xy-\frac12(x^2-t^2a^2) $$
$$ \hspace{6.2cm} +it(bx+ay-ax+a+b). $$
Therefore, the wall $\cW(\cO_S,\cO_S(-E-2F)[1])$ has equation 
$$ t^2(a(a+b)(2b-a)) + (3b-a)x^2 - 2axy + 2ay^2 + 3(b-a)x + 3ay = 0. $$
If we restrict to any horizontal plane of equation $t=\textrm{constant}$, we obtain a conic with discriminant
$$ (-2a)^2 - 4 (3b-a) (2a) = 12a^2 - 24ab = 12a(a-2b) < 0. $$
Therefore, the wall $\cW(\cO_S,\cO_S(-E-2F)[1])$ is an ellipsoid.

In the $xy$-plane, the line $ay+(b-a)x=0$ is tangent to both walls at $(0,0)$, and the line $a(y+2)+(b-a)(x+1)=0$ is tangent to the ellipse $\cW(\cO_S,\cO_S(-E-2F)[1])$ (See Figure \ref{fig:ellhypblp2}). Note that the vertical planes in $\cS_H$ over these lines do not intersect the region inside both of the walls, so any $(x,y,t)$ in the region satisfying condition (\ref{eqn:te-1-2relslope}) automatically satisfies condition (\ref{eqn:0-1-2inht}).
Therefore, the region that satisfies conditions (\ref{eqn:0-1-2inht}) and (\ref{eqn:te-1-2relslope}) is the region strictly inside both of the walls $\cW(\cO_S,\TE)$ and $\cW(\cO_S,\cO_S(-E-2F)[1])$.

\begin{figure}[h]
\centering
\includegraphics[scale=.4]{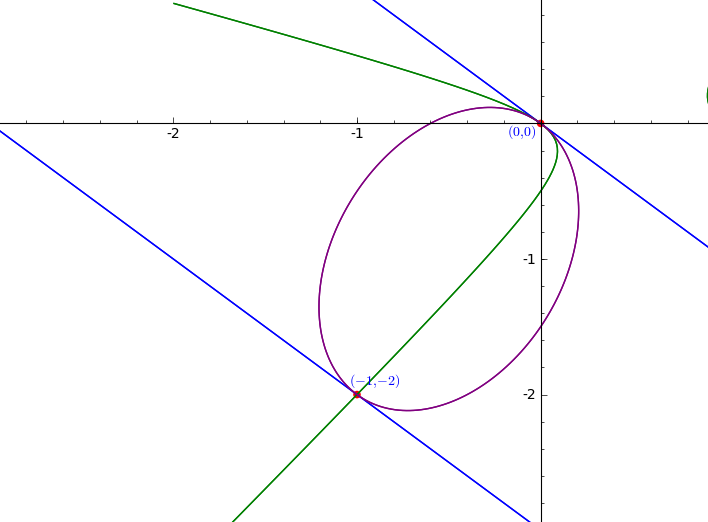}
\caption{Restriction of the walls $\cW(\cO_S,\TE)$ and $\cW(\cO_S,\cO_S(-E-2F)[1])$ to the $xy$-plane.}
\label{fig:ellhypblp2}
\end{figure}

We now show that conditions (\ref{eqn:-1-1relslope}) and (\ref{eqn:-1-1[1]relslope}) add no other restrictions.
For condition (\ref{eqn:-1-1relslope}), note that, if $\cO_S(-E-F) \in \cA$, then $\cO_S \in\cA$.
Since $(-E-F)^2>0$, \cite[Proposition 5.1, Remark 5.2]{linebundlessurfs} gives us that $\beta(\cO_S) > \beta(\cO_S(-E-F))$ as needed. 





Suppose, on the other hand, that $\cO_S(-E-F)[1]\in\cA$.
Then $\cO_S(-E-2F)[1] \su \cO_S(-E-F)[1] \in \cA$, and since $F^2=0$, we have that $\cO_S(-E-F)[1]$ does not destabilize $\cO_S(-E-2F)[1]$, which gives us $\beta(\cO_S(-E-F)[1]) > \beta(\cO_S(-E-2F)[1])$, as needed.

For the second inequality in Condition (\ref{eqn:-1-1[1]relslope}) we need to study the wall $\cW(\cO_S(-E-F)[1],\TE)$.
It has equation
$$ t^2(a(b-a)(2b-a)) + 
(a-b)x^2 - 2axy + 2ay^2 + (-a-b)x + 3ay + a = 0. $$
It is a hyperboloid and its restriction to the $xy$-plane goes through the points $(-1,-1)$ and $(0,-1/2)$.
The inequality $\beta(\cO_S(-E-F)[1])>\beta(\TE)$ of condition (\ref{eqn:-1-1[1]relslope}) is satisfied by all $(x,y,t)$ outside the hyperboloid.
Consider the hyperbola which is the restriction of the wall $\cW(\cO_S(-E-F)[1],\TE)$ to the $xy$-plane.
The tangent line at $(-1,-1)$ is the line $a(y+1)+(b-a)(x+1)=0$, and the left side of the hyperbola lies to the left of that line.
But this is the region where $\cO_S(-E-F)\in\cA$, which is not the case that we are considering.
The right side of the hyperbola touches the wall $\cW(\cO_S,\TE)$ at $(0,-1/2)$, but it is otherwise outside of it (See Figure \ref{fig:-E-FandTEblp2}).
Therefore, condition (\ref{eqn:-1-1[1]relslope}) is satisfied in the region where condition (\ref{eqn:te-1-2relslope}) is satisfied.


\begin{figure}[h]
\centering
\includegraphics[scale=.4]{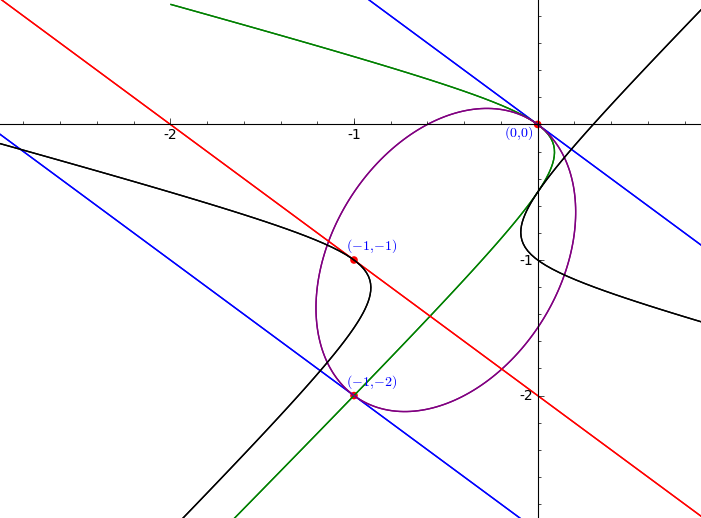}
\caption{The wall $\cW(\cO_S(-E-F)[1],\TE)$ does not share any region with $\cW(\cO_S,\TE)$ to the right of the line $a(y+1)+(b-a)(x+1)=0$.}
\label{fig:-E-FandTEblp2}
\end{figure}


In conclusion, the conditions (\ref{eqn:0-1-2inht}), (\ref{eqn:te-1-2relslope}) and (\ref{eqn:-1-1relslope}) (or (\ref{eqn:0-1-2inht}), (\ref{eqn:te-1-2relslope}) and (\ref{eqn:-1-1[1]relslope})) are satisfied in the region inside both the hyperboloidal wall $\cW(\cO_S,\TE)$ and the ellipsoidal wall $\cW(\cO(-E-2F)[1],\cO_S)$.
Let us call this region $R$. It is the region that we claim to be $R_{\bF'}$.

\textit{To prove that $R=R_{\bF'}$, we must show that all objects of $$\bF' = ( \cO_S(-E-2F)[2],\ \cO_S(E)|_E[1],\ \cO_S(-E-F)[1],\ \cO_S ) $$ are $\sigma$-stable for $\sigma \in R$. }

By this we mean that, for a given $\sigma\in R$, we need to prove that, whichever shift of $\cO_S(-E-2F)$, $\cO_S(E)|_E$, $\cO_S(-E-F)$, and $\cO_S$ is in $\cA$, it is $\sigma$-stable.

By Theorem \ref{thm: testable}, $\TE$ is $\sigma$-stable for all $\sigma\in\stabdivS$. For a line bundle $L$ on $S$,  \cite[Proposition 5.12, Lemma 6.2]{linebundlessurfs} implies that $L$ is $\sigma$-stable iff it is not destabilized by the inclusion $L(-E)$, and $L[1]$ is $\sigma$-stable iff it is not destabilized by the surjection $L[1] \sur L(E)[1]$.




It follows that $\cO_S$ is $\sigma$-stable when $\sigma$ is not in the region bounded by the wall $\cW(\cO_S(-E),\cO_S)$.
Since
$$ Z(\cO_S(-E)) = \frac12 + x - y + t^2ab - xy + \frac12 (x^2 - t^2a^2) + i (a - b - (bx+ay-ax)), $$
the equation of the wall is
$$ t^2(a(2b-a)(b-a)) + (a-b)x^2 - 2axy + 2ay^2 + (a-b)x - ay = 0. $$
It is a hyperboloid, and its restriction to the $xy$-plane passes through the point $(-1,-1/2)$ where it is tangent to the ellipsoidal wall $\cW(\cO_S(-E-2F)[1],\cO_S)$, but staying completely on the other side of the tangent line (See Figure \ref{fig:hypsnotinell}).
Thus $\cO_S$ is $\sigma$-stable for all $\sigma\in R$. 

Similarly, $\cO_S(-E-2F)[1]$ could only be destabilized by the quotient $\cO_S(-2F)[1]$.
But the wall $\cW(\cO(-E-2F)[1],\cO_S(-2F)[1])$ is actually the same as the wall $\cW(\cO_S(-E),\cO_S)$ moved down by 2 units in the $y$ direction.
Its restriction to the $xy$-plane passes through the point $(0,-3/2)$, where it is tangent to the ellipsoidal wall $\cW(\cO_S(-E-2F)[1],\cO_S)$, but staying completely on the other side of the tangent line (See Figure \ref{fig:hypsnotinell}).
Therefore, $\cO_S(-E-2F)[1]$ is also $\sigma$-stable for all $\sigma\in R$.

\begin{figure}[h]
\centering
\includegraphics[scale=.4]{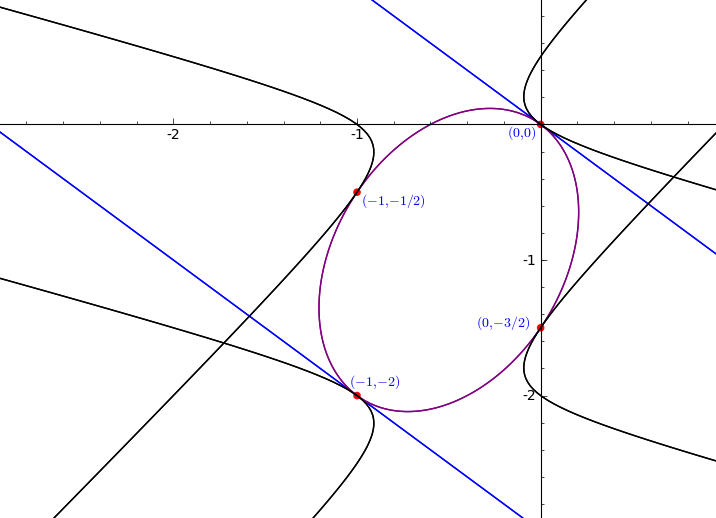}
\caption{The wall $\cW(\cO_S(-E-2F)[1],\cO_S)$ together with $\cW(\cO_S(-E),\cO_S)$ and $\cW(\cO_S(-E-2F)[1],\cO_S(-2F)[1])$.}
\label{fig:hypsnotinell}
\end{figure}

If $\cO_S(-E-F)\in\cA$, then it could only be destabilized by $\cO_S(-2E-F)$.
The wall $\cW(\cO_S(-2E-F),\cO_S(-E-F))$ can be seen to be the same as the wall $\cW(\cO_S(-E),\cO_S)$ moved one unit down and one unit to the left (see Figure \ref{fig:O2EFOE2F}).
The right side of the hyperboloidal wall $\cW(\cO_S(-2E-F),\cO_S(-E-F))$ is in the region where $\cO_S(-E-F)\not\in\cA$, so we do not have to worry about it in this case.
The left side of the wall is not in our region $R$.
Indeed, it can easily be checked that the two walls $\cW(\cO_S(-2E-F),\cO_S(-E-F))$ and $\cW(\cO_S(-E),\cO_S)$ do not cross the $x=-3/2$ vertical plane.
The left side of the hyperboloid $\cW(\cO_S(-2E-F),\cO_S(-E-F))$ is to the left of it, while the ellipsoid $\cW(\cO_S(-E),\cO_S)$ is to the right of it (see Figure \ref{fig:O2EFOE2F}).

\begin{figure}[h]
\centering
\includegraphics[scale=.4]{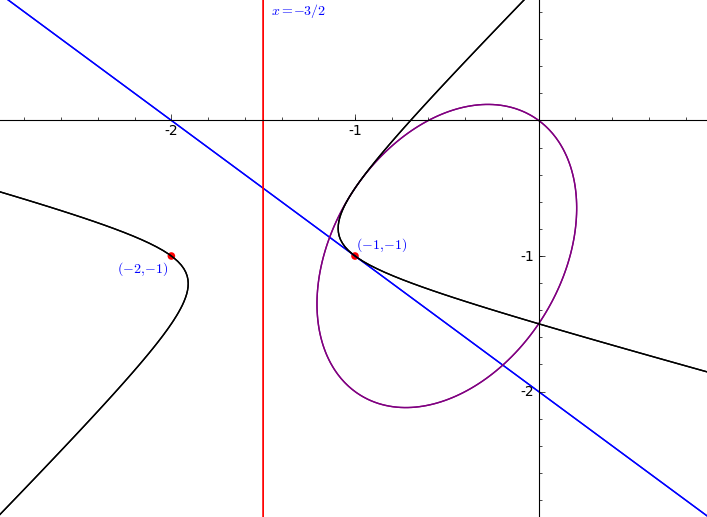}
\caption{The walls $\cW(\cO_S(-2E-F),\cO_S(-E-F))$ and $\cW(\cO_S(-E),\cO_S)$.}
\label{fig:O2EFOE2F}
\end{figure}

If, on the other hand, $\cO_S(-E-F)[1]\in\cA$, then it could only be destabilized by the quotient $\cO_S(-F)[1]$.
However, a quick calculation shows that the wall $\cW(\cO_S(-E-F)[1],\cO_S(-F)[1])$ is equal to the wall $\cW(\cO_S(-E-F)[1],\TE)$, and we already saw that such wall lies outside of our region $R$ when we look in the region where $\cO_S(-E-F)[1]\in\cA$ (see Figure \ref{fig:-E-FandTEblp2}).

Thus all objects of $\bF'$ are $\sigma$-stable for $\sigma \in R$ and we have shown that $R=R_{\bF'}$, the quiver region associated to $\bF'$.
\end{proof}



The translations of $R_{\bF'}$ by tensoring with line bundles do not cover the entire $xy$-plane. We now find the quiver region associated to $\bF''$ and show that the translations of both quiver regions together cover the $xy$-plane.





\subsection{Quiver region for $\bE''$ and the combined regions}

Determining the quiver region $R_{\bF''}$ associated to $$ \bE'' = ( \cO_S,\ \cO_S(E),\ G_2,\ \cO_S(E+F) ) $$ and its dual collection $$ \bF'' = ( \cO_S(-E-2F)[2],\ \cO_S(-F)[1],\ \TE,\ \cO_S ) $$
is very similar to the determination of the region $\bR_{\bF'}$.
Therefore, we summarize the argument and omit the details:

To have $\sigma \in R_{\bF''}$ we must have
\begin{itemize}
\item
$\cO_S, \TE, \cO_S(-E-2F)[1]\in\cA$
\item
$\beta(\cO_S), \beta(\TE)>\beta(\cO_S(-E-2F)[1])$.
\end{itemize}

Moreover, one of the following two things must happen:
\begin{itemize}
\item
$\cO_S(-F)\in\cA$, and $\beta(\cO_S), \beta(\TE) > \beta(\cO_S(-F))$, or
\item
$\cO_S(-F)[1]\in\cA$, and $\beta(\cO_S(-F)[1])>\beta(\cO_S(-E-2F)[1])$.
\end{itemize}
Also, all objects of $\bF''$ must also be $\sigma$-stable.

Any $\sigma$ satisfying the above is a quiver stability condition, and we find that $\bR_{\bF''}$ is the region strictly inside both the ellipsoidal wall $\cW(\cO_S,\cO_S(-E-2F)[1])$ and the hyperboloidal wall $\cW(\TE,\cO_S(-E-2F)[1])$. Figure \ref{fig:quiverregion''} shows the intersection of these walls with the $xy$-plane, as well as the lines that determine if $\cO_S, \cO_S(-E-2F)[1]\in\cA$.

\begin{figure}[h]
\centering
\includegraphics[scale=.4]{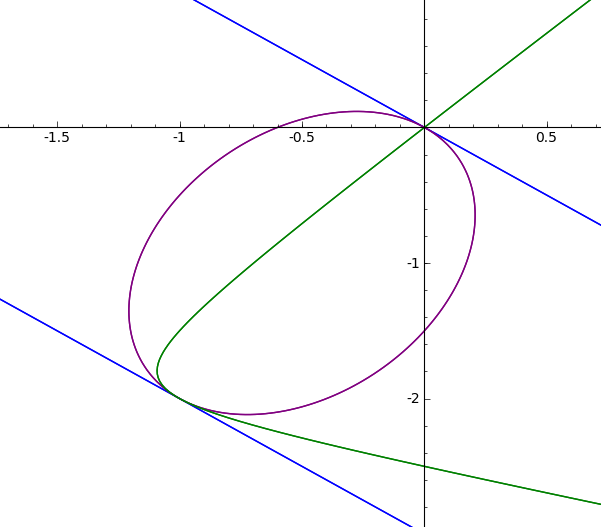}
\caption{The ellipsoidal wall $\cW(\cO_S,\cO_S(-E-2F)[1])$ and the hyperboloidal wall $\cW(\TE,\cO_S(-E-2F)[1])$.}
\label{fig:quiverregion''}
\end{figure}

The projection of the union of the two quiver regions $R_{\bF'}\cup R_{\bF''}$ to the $xy$-plane is the region strictly bounded by the ellipse that is the intersection of $\cW(\cO_S,\cO_S(-E-2F)[1])$ with the $xy$-plane. Indeed, the two hyperboloidal walls $\cW(\cO_S,\TE)$ and $\cW(\TE,\cO_S(-E-2F)[1])$ overlap, as in Figure \ref{fig:quiverregions'''}.

\begin{figure}[h]
\centering
\includegraphics[scale=.4]{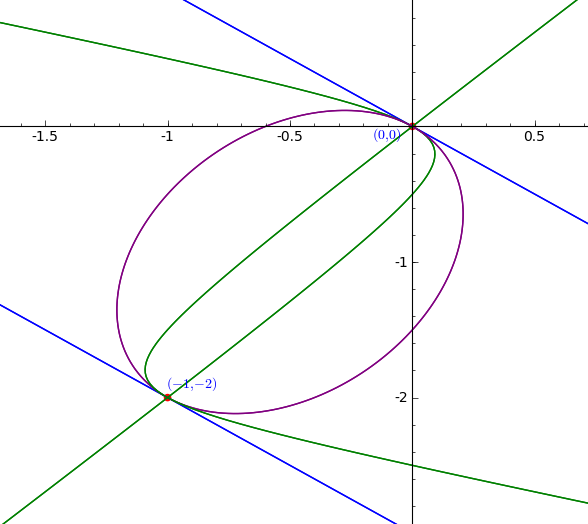}
\caption{The projection of the quiver regions $R_{\bF'}$ and $R_{\bF''}$  is the region strictly bounded by the ellipse $\cW(\cO_S,\cO_S(-E-2F)[1])$.}
\label{fig:quiverregions'''}
\end{figure}

The ellipse bounds a parallelogram with two corners cut off:
$$U=\{(x,y)| -1 \leq x \leq 0,\ x - 1 \leq y \leq x\} \smallsetminus \{(0,0), (-1,-2)\}.$$
The analogous region $U(p,q)$ associated to the quiver regions $R_{\bF'\otimes\cO_S(p,q)}$ and $R_{\bF''\otimes\cO_S(p,q)}$ is $U+(p,q)$, where the $+$ indicates component-wise addition, and together the regions $U(p,q)$ cover the entire $xy$-plane. Recalling the argument given in Section \ref{strategy}, we have shown the following.

\begin{pro}
\label{pro:blp2conj}
Conjecture \ref{con:main} holds for $S=Bl_p\Ptoo$. In particular, if the invariants $v$ satisfy the Bogomolov inequality, then every moduli space $\cM_\sigma(v)$ is isomorphic to a moduli space $\cM_{\sigma'}(v)$ where $\sigma'$ is in a quiver region $R_{\bF'\otimes\cO_S(p,q)}$ or $R_{\bF''\otimes\cO_S(p,q)}$, and hence $\cM_\sigma(v)$ is projective.
\end{pro}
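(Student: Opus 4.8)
The plan is to deduce the Proposition from the two quiver-region computations carried out above, together with the ``sliding down the wall'' mechanism recorded in Section \ref{strategy}. First I would observe that it suffices to work inside a single slice $\cS_H \subset \stabdivS$, since $\stabdivS$ is the union of the slices $\cS_H$ over all ample $H$; fixing such an $H$ and a Bogomolov class $v$, every $\sigma \in \cS_H$ is identified with a point $(x,y,t)$.

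The input from the preceding work is twofold. On the one hand, the two lemmas of this section exhibit the explicit quiver regions $R_{\bF'}$ and $R_{\bF''}$, each bounded by an ellipsoid and a hyperboloid that are tangent to the $xy$-plane along the relevant slope lines, and the displayed computation shows that the projections of $R_{\bF'}\cup R_{\bF''}$ and of all of its translates $R_{\bF'\otimes\cO_S(p,q)}\cup R_{\bF''\otimes\cO_S(p,q)}$ cover the entire $xy$-plane: the fundamental region $U$ tiles the plane under the lattice of twists $\cO_S(p,q)$, the two deleted corners of each copy being supplied by the neighboring copies that contain them as genuine corners. On the other hand, Section \ref{strategy} provides, via Maciocia's theorem, the decomposition $\cS_H=\bigcup_u \Pi_u$ into disjoint vertical planes in each of which the walls $\cW(v,\_)$ are nested semicircles (or vertical lines), together with the fact that $\cM_\sigma(v)$ is constant as $\sigma$ moves without crossing a wall for $v$.

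Next I would run the sliding argument. Given $\sigma\in\cS_H$, choose the vertical plane $\Pi_u\ni\sigma$. The class $v$ determines a nested family of semicircular walls in $\Pi_u$, and $\sigma$ lies either in a chamber or on one such semicircle $\cW$; in either case $\cM_\sigma(v)$ is unchanged as we move $\sigma$ within that chamber or along that wall. I would then slide $\sigma$ downward toward $t\to 0$, following the semicircle through $\sigma$ (or, in a chamber, staying between its two bounding semicircles) toward an endpoint on the $xy$-plane. No wall $\cW(v,\_)$ is crossed during this motion, so the moduli space is preserved. Because the translated quiver regions cover the $xy$-plane and their bounding quadrics are tangent to it, for $t$ sufficiently small the sliding point enters one of the regions $R_{\bF'\otimes\cO_S(p,q)}$ or $R_{\bF''\otimes\cO_S(p,q)}$; call the resulting stability condition $\sigma'$. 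At $\sigma'$, after rotation, the heart is $\cA_{\bF'\otimes\cO_S(p,q)}$ or $\cA_{\bF''\otimes\cO_S(p,q)}$ (using Theorem \ref{thm: testable} for the stability of $\TE$ and its twists), Bridgeland stability coincides with King's $\chi$-stability, and $\cM_{\sigma'}(v)$ is a projective moduli space of quiver representations. Since $\cM_\sigma(v)\cong\cM_{\sigma'}(v)$, this yields projectivity for all Bogomolov $v$ and all $\sigma\in\stabdivS$, which is precisely Conjecture \ref{con:main} for $S=Bl_p\Ptoo$.

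The main obstacle I anticipate is the geometric matching in the last step: verifying that the semicircle (or chamber) through an arbitrary $\sigma$ genuinely descends into a translated quiver region, rather than being pinched off above the $xy$-plane. This requires combining the covering of the $xy$-plane with the precise shape of the quiver-region boundaries near $t=0$, namely that the ellipsoidal and hyperboloidal walls are tangent to the $xy$-plane along the slope lines and therefore admit small-$t$ interior points over every interior point of the projected region, and handling endpoints that land on the deleted corners of a fundamental region by passing to the neighboring copy that contains them. One must also keep track of the hypothesis that $v$ satisfies the Bogomolov inequality, which is exactly the condition under which Maciocia's nestedness applies and the sliding motion can be guaranteed to remain wall-free.
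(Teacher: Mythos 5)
Your proposal is correct and follows essentially the same route as the paper: the paper establishes the two quiver regions $R_{\bF'}$ and $R_{\bF''}$, observes that the projections of their line-bundle translates tile the $xy$-plane of each slice $\cS_H$ via the regions $U(p,q)$, and then invokes verbatim the ``slide down the wall'' argument of Section \ref{strategy} (Maciocia's nested semicircular walls in the planes $\Pi_u$, constancy of $\cM_\sigma(v)$ along a wall, and King's $\chi$-stability in a quiver region). The only difference is that you spell out the small-$t$ descent into a translated quiver region and the handling of deleted corners more explicitly than the paper does, which is a reasonable elaboration rather than a departure.
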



\appendix

%
%
%
%


\section{Mutations defined by height functions}
\label{tiltingquivers}

Here we give the precise definition of a mutation defined by a height function for an object $E_\star$ in an exceptional collection or helix. We first address a few preliminaries before the definition then show that the information necessary to perform a mutation defined by a height function is contained in the quiver associated to a particular thread of the helix. We recall the definitions given in Section $\ref{exccolhts}$, but for the sake of generality we now allow the surface $S$ to be a smooth, projective variety $X$ over $\bC$.

Given an exceptional collection $\bE$ of objects in $D(X)$, we define the \textbf{right orthogonal category} as the full subcategory $\bE^\perp = \{ G \in D\ |\ \Homdot{E_\star}{G}=0 \text{ for all } E_\star\in\bE \}$. The \textbf{left orthogonal category} ${}^\perp\bE$ is defined similarly.  For $E_\star$ an exceptional object and $G\in{}^\perp E_\star$, the \textbf{left mutation}\footnote{Dually, we may define a right mutation. } of $G$ through $E_\star$, $L_{E_\star}(G)$, is defined by the canonical evaluation triangle $$\Homdot{E_\star}{G}\otimes E_\star \To G \To L_{E_\star}(G).$$ Note that $L_{E_\star}(G)\in E_\star^\perp$. If $\bE=(E_1,\ldots,E_k)$ we will denote $L_{E_1} L_{E_2}\cdots L_{E_k} G$ by $L_{\bE}(G)$.



Let $\bE=(E_1,\ldots,E_n)$ be a full strong exceptional collection of objects in $D(X)$ with dual collection $\bF=(F_n,\ldots,F_1)$. Bridgeland and Stern \cite{helicesondelpezzos} define the notion of $p$-related, which is crucial to that of height functions: if $i<j$ we say that $E_i$ and $E_j$ are \textbf{$p$-related} if $\Homx{k}{F_j}{F_i} = 0$ for $k \neq p$. 

A \textbf{height function} for an object $E_\star\in\bE$ is a function (called a``levelling'') $\phi:\bE \To \bZ$ such that $\phi^{-1}(0)=\{E_\star\}$, and $\phi(E_j)=p\neq 0$ implies that $E$ and $E_j$ are $p$-related if $p\geq 0$, or if $p<0$ then $E_j$ and $E$ are $-p$-related. One defines height functions for helices be asking $\phi(E_{i+n})=\phi(E_i)+1 + \dim X$ for each $i$ and the above $p$-related conditions to hold for any thread in the helix.

Given a geometric helix $\bH=(E_i)_{i\in\bZ}$ with a height function $\phi$ for $E_\star$, a \textbf{mutation defined by a height function} for $E_\star$ (henceforth called a \textbf{left tilt} at $E_\star$) constructs a related helix and levelling. We describe the operation algorithmically: to perform a left tilt at $E_\star$...
\begin{itemize}
\item Choose a thread $\bE=(E_k,\ldots,E_{k+n})$ of $\bH$ which contains both $E_\star$ and $\phi^{-1}(-1)=:\bE_{-1}$, i.e where $\bE = (\ldots, \bE_{-1}, E_\star, \ldots)$.
\item Left mutate (and shift) $E$ through $\bE_{-1}$ and keep this new positioning, i.e.  $\bE' = (\ldots, L_{\bE_{-1}}E_\star[-1], \bE_{-1}, \ldots)$.
\item Use $\bE'$ to generate a helix $\bH'$ and a levelling $\phi'$ such that $\phi'^{-1}(-1)=L_{\bE_{-1}}E_\star[-1]$ and $\phi'^{-1}(0)=\Emo$. 
\item The pair ($\bH', \phi')=:\sigma_0(\bH,\phi)$ is the result of the left tilt at $E_\star$.
\end{itemize}

Recall (Section \ref{exccolhts}) that to a full strong exceptional collection we associate a quiver where the number of arrows from vertex $i$ to vertex $j$ is $n_{ij}=\dim \Homx{1}{F_j}{F_i}$. We claim that the helix $\bH'$ obtained via a left tilt at $E_\star$ can be deduced using the the quiver associated to the thread $\bE=(E_1,\ldots,E_n=E_\star)$\footnote{Note that we may always harmlessly change the indexing of $\bH$ so that $E_\star=E_n$.}. The idea is that, if $E_i$'s vertex has an arrow to $E_\star$'s vertex, then $\dim \Homx{1}{F_n}{F_i} > 0 $ and it will follow that $E_i$ and $E_\star$ are 1-related and hence $E_i \in \bE_{-1}$. The essence of Proposition \ref{quivermutprop} is that the other objects in $\bE_{-1}$ do not meaningfully affect the result of the mutation.

\begin{pro}
\label{quivermutprop}
Let $\bH$ be a geometric helix of exceptional objects of $D(X)$ and let $\phi$ be a height function for $E_\star \in \bH$. Consider the thread $\bE = (E_1,\ldots,E_n=E_\star)$ and set 
$$\bE_{-1}^a := \{\text{objects in $\bE$ whose vertex has arrows to $E_\star$'s vertex}\}.$$ 
Then the helix $\bH^\prime$ obtained through a left tilt at $E_\star$ is obtained by taking $E_\star$, moving it just to the left of the last object of $\bE_{-1}^a$, then replacing $E_\star$ with $L_{\bE_{-1}^a}(E_\star)[-1]$ and generating the helix\footnote{We speak algorithmically to avoid cumbersome notation. For instance, a priori there may be objects between $E_\star$ and those of $\Emo^a$.}.

\end{pro}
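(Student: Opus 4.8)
The plan is to compare the two recipes for the left tilt at $E_\star$: the definition mutates $E_\star$ through the entire level set $\Emo=\phi^{-1}(-1)$, while the statement mutates only through the arrow objects $\Emo^a$ (and repositions). Writing $\Emo^a=\{E_{a_1},\dots,E_{a_s}\}$ with $a_1<\dots<a_s$, I would show that the extra objects of $\Emo\smallsetminus\Emo^a$, together with all objects lying between $E_{a_s}$ and $E_\star$, contribute nothing to the mutation, so that $L_{\Emo}(E_\star)=L_{\Emo^a}(E_\star)$ and the two recipes generate the same helix. The main tool is the path description of morphisms furnished by the quiver of the strong exceptional thread $\bE$.

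First I would record the inclusion $\Emo^a\subseteq\Emo$. If $E_i$'s vertex carries an arrow into $E_\star$'s vertex then $\Homx{1}{F_n}{F_i}\neq 0$; since $\bH$ is geometric, $\Homdot{F_n}{F_i}$ is concentrated in a single degree, which is therefore $1$, so $E_i$ and $E_\star$ are $1$-related and $\phi(E_i)=-1$. Thus $E_i\in\Emo$, and the objects of $\Emo\smallsetminus\Emo^a$ are exactly the ``phantom'' level-$(-1)$ objects with $\Homdot{F_n}{F_i}=0$.

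The crux is the following. Because $\bE$ is strong, every $\Homdot{E_i}{E_j}$ is concentrated in degree $0$, and the algebra $\text{End}(\bigoplus_i E_i)$ is the path algebra of the quiver with relations; hence $\Homx{0}{E_i}{E_\star}$ is spanned by paths $i\rightsquigarrow n$, each of which terminates in an arrow into $E_\star$, i.e.\ factors through an object of $\Emo^a$. Two consequences follow. (a) If $E_j$ lies strictly to the right of $E_{a_s}$, then every object between $E_j$ and $E_\star$ is a non-arrow object, so there is no forward path $j\rightsquigarrow n$ at all; thus $\Homdot{E_j}{E_\star}=0$ and $E_\star$ slides past $E_j$ with $L_{E_j}(E_\star)=E_\star$, which lets me move $E_\star$ leftward to the arrow block without changing it. (b) Applying $\Homdot{E_j}{-}$ to the evaluation triangle $\bigoplus_{E_a\in\Emo^a}\Homdot{E_a}{E_\star}\otimes E_a\to E_\star\to L_{\Emo^a}(E_\star)$ for a phantom $E_j$, the composition map $\bigoplus_a\Homdot{E_a}{E_\star}\otimes\Homdot{E_j}{E_a}\to\Homdot{E_j}{E_\star}$ is surjective, precisely because every morphism $E_j\to E_\star$ factors through $\Emo^a$; this forces $\Homdot{E_j}{L_{\Emo^a}(E_\star)}=0$, so inserting the phantom objects into the mutation does nothing and $L_{\Emo}(E_\star)=L_{\Emo^a}(E_\star)$. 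Assembling (a) and (b) in the thread $\bE=(E_1,\dots,E_n)$ then places $L_{\Emo^a}(E_\star)[-1]$ in the position dictated by the tilt, and I would finish by generating the helix and checking (via the Bridgeland--Stern construction) that the induced levelling $\phi'$ agrees, so that the resulting thread is a thread of the same $\bH'$.

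I expect the main obstacle to be consequence (b): the factorization statement is transparent in degree $0$, but $L_{\Emo^a}(E_\star)$ is generally not a sheaf, so one must control $\Homx{k}{E_j}{-}$ in every cohomological degree along the iterated evaluation triangles and verify that surjectivity of the composition map forces the whole complex $\Homdot{E_j}{L_{\Emo^a}(E_\star)}$ to vanish. A secondary subtlety is the ordering: since the arrow and phantom objects of $\Emo$ are interspersed, one must mutate outward starting from the object nearest $E_\star$ and check at each stage that the arrow mutations already performed have killed the relevant factoring morphisms before a phantom object is reached; and finally one must confirm that the thread window used in the definition and the one in the statement generate literally the same helix.
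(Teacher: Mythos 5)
Your overall shape---show that the ``phantom'' objects of $\Emo$ (those with no arrows into $E_\star$'s vertex) do not affect the mutation, then reposition---is the same as the paper's, and your step (a) and the inclusion $\Emo^a\subseteq\Emo$ agree with it. But the paper never attempts your step (b): for the key identity $L_{\Emo}(E_\star)\cong L_{\Emo^a}(E_\star)$ it simply invokes the proof of \cite[Proposition 7.1]{helicesondelpezzos} (with $m=0$, $k=1$, $i=n$), using that a phantom $E_j\in\Emo$ has $\Homx{1}{F_n}{F_j}=0$ and hence, because the height function forces concentration in a single degree, $\Homdot{F_n}{F_j}=0$. It then finishes not by tracking triangles but by \emph{modifying the height function}: the phantoms lying to the left of $\Emo^a$ are demoted to level $-2$, the left tilt for the new levelling places $L_{\Emo^a}(E_\star)[-1]$ exactly where the Proposition asserts, and the two resulting sequences are identified because $L_{\Emo^a}(E_\star)$ is mutually orthogonal to the demoted phantoms (deduced from the fact that both placements yield geometric helices). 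Your proposal omits this last comparison across the phantoms to the \emph{left} of $\Emo^a$; your (a) only handles objects to the right of the last arrow object.

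The genuine gap is in your (b). Surjectivity of the composition map $\bigoplus_a\Homx{0}{E_a}{E_\star}\otimes\Homx{0}{E_j}{E_a}\to\Homx{0}{E_j}{E_\star}$ does \emph{not} force $\Homdot{E_j}{L_{\Emo^a}(E_\star)}=0$: in the long exact sequence coming from the evaluation triangle, surjectivity only kills the degree-zero term, while the \emph{kernel} of the composition map survives as $\Homx{-1}{E_j}{L_{\Emo^a}(E_\star)}$, and that kernel is typically nonzero (it records relations of the quiver). Concretely, on $\Ptoo$ with the collection $(\cO,\cO(1),\cO(2))$ the composition $\Homx{}{\cO}{\cO(1)}\otimes\Homx{}{\cO(1)}{\cO(2)}\to\Homx{}{\cO}{\cO(2)}$ is surjective, so there are no arrows from the first vertex to the last; yet $L_{\cO(1)}(\cO(2))\cong\Omega_{\Ptoo}(2)[1]$ and $\Homx{-1}{\cO}{\Omega_{\Ptoo}(2)[1]}=H^0(\Omega_{\Ptoo}(2))\cong\bC^3\neq0$, so $L_{\cO}$ acts nontrivially. (There $\cO$ sits at level $-2$, so the Proposition is not contradicted, but your proposed mechanism is.) What actually makes a level-$(-1)$ phantom harmless is the full vanishing $\Homdot{F_n}{F_j}=0$ in \emph{all} degrees---no arrows \emph{and} no contribution in any other degree---which is strictly stronger than surjectivity of the degree-zero composition map; your argument identifies the phantoms using the level-$(-1)$ hypothesis but never uses it again. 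To close the gap you must either import the dual-collection computation of \cite[Proposition 7.1]{helicesondelpezzos}, as the paper does, or carry out that homological bookkeeping through the iterated, order-sensitive mutation triangles yourself.
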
 

\begin{rem}
We identify two helices if they differ by a rearrangement of mutually orthogonal, adjacent objects. Since $A$ mutually orthogonal to $B$ implies that $L_A B = B$, it follows that the objects of the dual collection $\bF$ do not change after shuffling mutually orthogonal objects of a thread $\bE$.
\end{rem}


\begin{proof}[Proof of Proposition \ref{quivermutprop}]
To left tilt $\bH$ at $E_\star$, one takes $E_\star$, moves it just to the left of $\Emo$ and then replaces $E_\star$ with $L_{\Emo} (E_\star)[-1]$. We show that our process produces the same result.

To begin, in the proof of \cite[Proposition 7.1]{helicesondelpezzos} (using $m=0, k=1, i=n$), we may replace $\Emo$ with $\Emo^a$,  since if $E_j$ has no arrows to $E_\star$ then  $d^1_{jn} := \text{dim }\Homx{1}{F_n}{F_j} = 0$. It follows that $L_{\Emo} (E_\star) \cong L_{\Emo^a} (E_\star)=:\cL$. Furthermore, it follows from  \cite[Proposition 7.1]{helicesondelpezzos} that for any two height functions $\phi, \widetilde{\phi}$ for $E_\star$ we have $L_{\Emo}(E_\star)\cong L_{\widetilde{\bE}_{-1}}(E_\star)$.

Call the sequence our process produces $\bH^a$. We show that $\bH^a$ is obtained by a left tilt at $E_\star$ by adjusting the height function $\phi$ to obtain a new height function $\widetilde{\phi}$, where the left-most object of $L_{\widetilde{\bE}_{-1}}(E)$ is the left-most object in $\Emo^a$. It follows that $\bH^a$ is a geometric helix. We then show that $\bH^a$ differs from $\bH'$ by moving $\cL$ across a (possibly empty) set of objects with which it is mutually orthogonal. 

Let $\bF = (F_n,\ldots,F_1)$ be the dual collection corresponding to $\bE$.  Since the height function $\phi$ exists by assumption, if $\Homx{p}{F_n}{F_i} \neq 0$ for some $p$, then $p$ is the only such degree. Now, suppose $E_i \in \Emo$. Then either $\dim \Homx{1}{F_n}{F_i} \neq 0$ (and thus $E_i$'s vertex has an arrow to $E_\star$'s vertex) or $\dim \Homx{p}{F_n}{F_i} = 0$ for all  $p$ (in which case there are no arrows from $E_i$'s vertex to $E_\star$'s vertex).


By definition, $\Emo^a$ is the subset of $\bE$ with $\dim \Homx{1}{F_0}{F_i} \neq 0$ and thus $\Emo^a \subset \Emo$. It follows that any object $E_j$ of $\Emo$ that is to the left of $\Emo^a$ (in the ordering of $\bE$) has $\Homx{p}{F_n}{F_j} = 0$ for all $p$ and so can be moved into $\bE_{-2}$. Doing this gives a height function $\widetilde{\phi}$, where a left tilt at $E_\star$ is accomplished by moving $E_\star$ just to the left of $\widetilde{\bE}_{-1}$ (which is the same as just to the left of $\Emo^a$) and replacing $E_\star$ with $L_{\widetilde{\bE}_{-1}}(E_\star) \cong L_{\Emo}(E_\star) \cong \cL$.


Since placing $\cL$ either to the left of $\Emo$ or to the left of $\Emo^a$ gives a geometric helix, we conclude that $\cL$ is mutually orthogonal to all objects that are in $\Emo$ but to the left of $\Emo^a$.
\end{proof}







\section{Calculations}


\subsection{$\chi(A,B)$ in $\bP^1\times\bP^1$}

Let $A,B$ be two objects of rank $r_A,r_B$, respectively.
Suppose moreover that $c_1(A)=a_1D_1+a_2D_2$ and $c_1(B)=b_1D_1+b_2D_2$.
Then,
$$ \chi(A,B) = r_A r_B + \frac12 (r_A d_B - r_B d_A) + r_B \ch_2(A) + r_A \ch_2(B) - c_1(A) c_1(B), $$
where $d_A$ (resp.\ $d_B$) is the degree of $A$ (resp.\ $B$) defined by $d_A=-K_S\cdot c_1(A)$ (resp.\ $d_B=-K_S\cdot c_1(B)$).
Since $K_S=-2D_1-2D_2$, we have that $d_A=2(a_1+a_2)$ (resp.\ $d_B=2(b_1+b_2)$), and
$$ \chi(A,B) = r_A r_B + r_A (b_1 + b_2) - r_B (a_1 + a_2) + r_B \ch_2(A) + r_A \ch_2(B) - c_1(A) c_1(B). $$


\subsection{$L_{\cO_S(1,0)}L_{\cO_S(0,1)}(\cO_S(1,1))$ in $\bP^1\times\bP^1$}\label{E'}

\begin{itemize}
\item
To calculate $L_{\cO_S(0,1)}(\cO_S(1,1))$, note that
$$ \chi(\cO_S(0,1),\cO_S(1,1)) = 1 + 2 - 1 + 1 - 1 = 2. $$
Therefore, we obtain the map
$$ \cO_S(0,1)^{\oplus2} \lfun \cO_S(1,1), $$
and $L_{\cO_S(0,1)}(\cO_S(1,1))=\cO_S(-1,1)[1]$.
\begin{itemize}
\item
To calculate $L_{\cO_S(1,0)}(\cO_S(-1,1)[1])$, note that
$$ \chi(\cO_S(1,0),\cO_S(-1,1)) = 1 - 1 - 1 - 1 = -2. $$
Therefore, we obtain the map
$$ \cO_S(1,0)^{\oplus2} \lfun \cO_S(-1,1)[1], $$
and $L_{\cO_S(1,0)}(\cO_S(-1,1)[1])=G[1]$, where $G$ is a rank 3 extension of $\cO_S(1,0)^{\oplus2}$ by $\cO_S(-1,1)$.
We have that $\rk(G)=3$, $c_1(G)=D_1+D_2$, and $\ch_2(G)=-1$.
\end{itemize}

\end{itemize}


\subsection{$\bF'$ in $\bP^1\times\bP^1$}\label{F'}

Recall that
$$ \bE' = ( \cO_S, G, \cO_S(1,0), \cO_S(0,1) ), $$
where $G$ is a sheaf of rank 3 with $c_1(G)=D_1+D_2$ and $\ch_2(G)=-1$.

We calculate here the dual collection $\bF'$.

\begin{itemize}
\item
To calculate $L_{\cO_S}(G)$, note that
$$ \chi(\cO_S,G) = 3 + 2 - 1 = 4. $$
Therefore, we obtain the map
$$ \cO_S^{\oplus4} \lfun G, $$
and $L_{\cO_S}(G)=\cO_S(-1,-1)[1]$.
\item
To calculate $L_{G}(\cO_S(1,0))$, note that
$$ \chi(G,\cO_S(1,0)) = 3 + 3 - 2 - 1 - 1 = 2. $$
Therefore, we obtain the map
$$ G^{\oplus2} \lfun \cO_S(1,0), $$
and $L_{G}(\cO_S(1,0))=G_1[1]$,
where $G_1$ has $\rk(G_1)=5$, $c_1(G_1)=D_1+2D_2$, and $\ch_2(G_1)=-2$.
\begin{itemize}
\item
To calculate $L_{\cO_S}(G_1[1])$, note that
$$ \chi(\cO_S,G_1) = 5 + 3 - 2 = 6. $$
Therefore, we obtain the map
$$ \cO_S^{\oplus6}[1] \lfun G_1[1], $$
and $L_{\cO_S}(G_1[1])=\cO_S(-1,-2)[2]$.
\end{itemize}
\item
We may calculate $L_{\cO_S} L_{G} L_{\cO_S(1,0)} (\cO_S(0,1))$ with similar computations, but it is faster to use \cite[Corollary 2.10 \& Definition 3.1]{helicesondelpezzos} to obtain $$L_{\cO_S} L_{G} L_{\cO_S(1,0)} (\cO_S(0,1)) = \cO_S(0,1) \otimes \omega_S[2] = \cO_S(-2,-1)[2].$$

\end{itemize}

Therefore,
$$ \bF' = ( \cO_S(-2,-1)[2], \cO_S(-1,-2)[2], \cO_S(-1,-1)[1], \cO_S ). $$


\subsection{$\chi(A,B)$ in $Bl_p\bP^2$}

Let $A,B$ be two objects of rank $r_A,r_B$, respectively.
Suppose moreover that $c_1(A)=a_EE+a_FF$ and $c_1(B)=b_EE+b_FF$.
Then,
$$ \chi(A,B) = r_A r_B + \frac12 (r_A d_B - r_B d_A) + r_B \ch_2(A) + r_A \ch_2(B) - c_1(A) c_1(B), $$
where $d_A$ (resp.\ $d_B$) is the degree of $A$ (resp.\ $B$) defined by $d_A=-K_S\cdot c_1(A)$ (resp.\ $d_B=-K_S\cdot c_1(B)$).
Since $K_S=-3H+E=-2E-3F$, we have that $d_A=a_E+2a_F$ (resp.\ $d_B=b_E+2b_F$).

In particular,
$$ \chi(\cO_S,B) = r_B + \frac12 b_E + b_F + \ch_2(B). $$
If $B$ is a line bundle $L$, then $r_B=1$ and $\ch_2(B)=(c_1(B))^2/2=b_Eb_F-b_E^2/2$, so we would have
$$ \chi(\cO_S,L) = 1 + \frac12 (b_E - b_E^2) + b_F + b_E b_F. $$


\subsection{$L_{\cO_S(F)}L_{\cO_S(E+F)}(\cO_S(E+2F))$ in $Bl_p\bP^2$}\label{E'2}

\begin{itemize}
\item
To calculate $L_{\cO_S(E+F)}(\cO_S(E+2F))$, note that
$$ \chi(\cO_S(E+F),\cO_S(E+2F)) = 1 + \frac12 (5 - 3) + \frac12 + \frac32 - 2 = 2. $$
Therefore, we obtain the map
$$ \cO_S(E+F)^{\oplus2} \lfun \cO_S(E+2F), $$
and $L_{\cO_S(E+F)}(\cO_S(E+2F))=\cO_S(E)[1]$.
\begin{itemize}
\item
To calculate $L_{\cO_S(F)}(\cO_S(E)[1])$, note that
$$ \chi(\cO_S(F),\cO_S(E)) = 1 + \frac12 (1 - 2) + 0 - \frac12 - 1 = -1. $$
Therefore, we obtain the map
$$ \cO_S(F) \lfun \cO_S(E)[1], $$
and $L_{\cO_S(F)}(\cO_S(E)[1])=G_1[1]$, where $G_1$ is the rank 2 extension of $\cO_S(F)$ by $\cO_S(E)$.
We have that $\rk(G_1)=2$, $c_1(G_1)=E+F$, and $\ch_2(G_1)=-1/2$.
\end{itemize}
\end{itemize}


\subsection{$\bF'$ in $Bl_p\bP^2$}\label{F'2}

Recall that
$$ \bE' = ( \cO_S, G_1, \cO_S(F), \cO_S(E+F) ), $$
where $G_1$ is a sheaf of rank 2 with $c_1(G_1)=E+F$, and $\ch_2(G_1)=-1/2$.
We calculate here the dual collection $\bF'$.

\begin{itemize}
\item
To calculate $L_{\cO_S}(G_1)$, note that
$$ \chi(\cO_S,G_1) = 2 + \frac12 \cdot 3 - \frac12 = 3. $$
Therefore, we obtain the map
$$ \cO_S^{\oplus3} \lfun G_1, $$
and $L_{\cO_S}(G_1)=\cO_S(-E-F)[1]$.
\item
To calculate $L_{G_1}(\cO_S(F))$, note that
$$ \chi(G_1,\cO_S(F)) = 2 + \frac12 (4 - 3) - \frac12 - 1 = 1. $$
Therefore, we obtain the map
$$ G_1 \lfun \cO_S(F), $$
and $L_{G_1}(\cO_S(F))=\cO_S(E)[1]$.
\begin{itemize}
\item
To calculate $L_{\cO_S}(\cO_S(E)[1])$, note that
$$ \chi(\cO_S,\cO_S(E)) = 1 + \frac12 - \frac12 = 1. $$
Therefore, we obtain the map
$$ \cO_S[1] \lfun \cO_S(E)[1], $$
and $L_{\cO_S}(\cO_S(E)[1])=\cO_S(E)|_E[1]$.
Note that $\cO_S(E)|_E$ has rank 0, $c_1=E$, and $\ch_2=-1/2$.
It is therefore isomorphic to $\cO_S(-E-2F)|_E$, which has the same invariants.
\end{itemize}
\item
Using \cite[Corollary 2.10 \& Definition 3.1]{helicesondelpezzos}, we calculate $$L_{\cO_S} L_{G_1} L_{\cO_S(F)} (\cO_S(E+F)) = \cO_S(E+F) \otimes \omega_S[2] = \cO_S(-E-2F)[2].$$

\end{itemize}

Therefore,
$$ \bF' = ( \cO_S(-E-2F)[2], \cO_S(E)|_E[1], \cO_S(-E-F)[1], \cO_S ). $$


\subsection{$L_{\cO_S(E+F)}(\cO_S(2E+2F))$ in $Bl_p\bP^2$}\label{E''}

Note that
$$ \chi(\cO_S(E+F),\cO_S(2E+2F)) = 1 + \frac12 (6 - 3) + \frac12 + 2 - 2 = 3. $$
Therefore, we obtain the map
$$ \cO_S(E+F)^{\oplus3} \lfun \cO_S(2E+2F), $$
and $L_{\cO_S(E+F)}(\cO_S(E+2F))=G_2[1]$, where $G_2$ is a sheaf with $\rk(G_2)=2$, $c_1(G_2)=E+F$, and $\ch_2(G_2)=-1/2$.


\subsection{$\bF''$ in $Bl_p\bP^2$}\label{F''}

Recall that
$$ \bE'' = \< \cO_S, \cO_S(E), G_2, \cO_S(E+F) \>, $$
where $G_2$ is a sheaf of rank 2 with $c_1(G_2)=E+F$ and $\ch_2(G_2)=-1/2$.
We calculate here the dual collection $\bF''$.

\begin{itemize}
\item
To calculate $L_{\cO_S}(\cO_S(E))$, note that
$$ \chi(\cO_S,\cO_S(E)) = 1 + \frac12 - \frac12 = 1. $$
Therefore, we obtain the map
$$ \cO_S \lfun \cO_S(E), $$
and $L_{\cO_S}(\cO_S(E))=\TE$.

\item
To calculate $L_{\cO_S(E)}(G_2)$, note that
$$ \chi(\cO_S(E),G_2) = 2 + \frac12 (3 - 2) + 2 \cdot \left(- \frac12\right) - \frac12 = 1. $$
Therefore, we obtain the map
$$ \cO_S(E) \lfun G_2, $$
and $L_G(\cO_S(F))=\cO_S(F)$.
\begin{itemize}
\item
To calculate $L_{\cO_S}(\cO_S(F))$, note that
$$ \chi(\cO_S,\cO_S(F)) = 1 + \frac12 \cdot 2 = 2. $$
Therefore, we obtain the map
$$ \cO_S^{\oplus2} \lfun \cO_S(F), $$
and $L_{\cO_S}(\cO_S(F))=\cO_S(-F)[1]$.
\end{itemize}
\item Using \cite[Corollary 2.10 \& Definition 3.1]{helicesondelpezzos}, we calculate $$L_{\cO_S} L_{\cO_S(E)} L_{G_2} (\cO_S(E+F)) = \cO_S(E+F) \otimes \omega_S[2] = \cO_S(-E-2F)[2].$$

\end{itemize}

Therefore,
$$ \bF'' = ( \cO_S(-E-2F)[2], \cO_S(-F)[1], \cO_S(E)|_E, \cO_S ). $$

\bibliographystyle{alphaurl}

\bibliography{references}

\begin{thebibliography}{ABCH13}

\bibitem[AB13]{ABL}
Daniele Arcara and Aaron Bertram.
\newblock Bridgeland-stable moduli spaces for {K}-trivial surfaces.
\newblock {\em JEMS}, 15(1):1--38, 2013.
\newblock (with an appendix by Max Lieblich).

\bibitem[ABCH13]{ABCH}
Daniele Arcara, Aaron Bertram, Izzet Coskun, and Jack Huizenga.
\newblock The minimal model program for the {H}ilbert scheme of points on
  $\mathbb{P}^2$ and {B}ridgeland stability.
\newblock {\em Adv. Math.}, 235:580--626, 2013.

\bibitem[AM14]{linebundlessurfs}
Daniele Arcara and Eric Miles.
\newblock Bridgland stability of line bundles on surfaces.
\newblock {\em Preprint}, 2014.
\newblock arXiv:1401.6149.

\bibitem[BM14]{BMprojandbirat}
Arend Bayer and Emanuele Macr{\`{\i}}.
\newblock Projectivity and birational geometry of {B}ridgeland moduli spaces.
\newblock {\em J. Amer. Math. Soc.}, 27(3):707--752, 2014.

\bibitem[Bri07]{stabcondsontricats}
Tom Bridgeland.
\newblock Stability conditions on triangulated cateogories.
\newblock {\em Ann. Math.}, 166:317--345, 2007.

\bibitem[BS10]{helicesondelpezzos}
Tom Bridgeland and David Stern.
\newblock Helices on del {P}ezzo surfaces and tilting {C}alabi-{Y}au algebras.
\newblock {\em Adv. Math.}, 224(4):1672--1716, 2010.

\bibitem[GK04]{helixtheory}
A.~L. Gorodentsev and S.~A. Kuleshov.
\newblock Helix theory.
\newblock {\em Mosc. Math. J.}, 4(2):377--440, 535, 2004.

\bibitem[Kin94]{kingquiver}
A.D. King.
\newblock Moduli of representations of finite dimensional algebras.
\newblock {\em Quart. J. Math. Oxford}, 45:515--530, 1994.

\bibitem[Mac07]{macri}
Emanuele Macr\`i.
\newblock Stability conditions on curves.
\newblock {\em Mathematical Research Letters}, 14:657--672, 2007.

\bibitem[Mac14]{MaciociaWalls}
Antony Maciocia.
\newblock Computing the walls associated to {B}ridgeland stability conditions
  on projective surfaces.
\newblock {\em Asian J. Math.}, 18(2), 2014.

\bibitem[MM13]{MacMea}
Antony Maciocia and Ciaran Meachan.
\newblock Rank one bridgeland stable moduli spaces on a principally polarized
  abelian surface.
\newblock {\em IMRN}, 2013(9):2054--2077, 2013.

\bibitem[Tod13]{stabcondsextrcontrs}
Yukinobu Toda.
\newblock Stability conditions and extremal contractions.
\newblock {\em Math. Ann.}, 357(2):631--685, 2013.

\end{thebibliography}

\end{document}